\newtheorem{theorem}{Theorem}[section]
\newtheorem{corollary}[theorem]{Corollary}
\newtheorem{observation}[theorem]{Observation}
\newtheorem{proposition}[theorem]{Proposition}
\newtheorem{conjecture}[theorem]{Conjecture}
\theoremstyle{definition}
\newcommand\abs[1]{\left|#1\right|}
\renewcommand\deg[1]{\operatorname{deg}\left(#1\right)}
\newcommand\dist[2]{\operatorname{dist}\left(#1,#2\right)}
\newcommand\floor[1]{\lfloor#1\rfloor}
\DeclareMathOperator{\Z}{Z}
\DeclareMathOperator{\F}{F}
\DeclareMathOperator{\pt}{pt}
\DeclareMathOperator{\PT}{PT}
\DeclareMathOperator{\ft}{ft}
\begin{document}
\title{\bf On the forts and related parameters of the hypercube graph}
\author{Boris Brimkov\footnote{Department of Mathematics and Statistics, Slippery Rock University {\tt (boris.brimkov@sru.edu)}}, Thomas R. Cameron\footnote{Department of Mathematics, Penn State Behrend {\tt (trc5475@psu.edu)}}, Owen Grubbs\footnote{Department of Mathematics, Penn State Behrend {\tt (olg5037@psu.edu)}}}
\maketitle

\begin{abstract}
In 2018, forts were defined as non-empty subsets of vertices in a graph where no  vertex outside the set has exactly one neighbor in the set. Forts have since been used to characterize zero forcing sets, model zero forcing as an integer program, and provide lower bounds on the zero forcing number. In this article, we give a complete characterization of minimum forts in the hypercube graph, showing that they are automorphic to one of two sets. In contrast, non-automorphic minimum zero forcing sets are identified with distinct propagation times. We also derive the fractional zero forcing number and bounds on the fort number of the hypercube. When the hypercube's dimension is a power of two, the fort number and fractional zero forcing number are equal to the domination number, total domination number, and open packing number. Lastly, we present general constructions for minimal forts in the Cartesian product of graphs, reflecting some minimal forts of the hypercube.
\\ \\
\textbf{Keywords:} Forts; Hypercube; Zero forcing; Failed zero forcing; Cartesian product; Propagation time
\\
\textbf{AMS Subject Classification:} 05C15, 05C30, 05C57, 05C76
\end{abstract}
\thispagestyle{empty}
\section{Introduction}\label{sec:intro}
Zero forcing is a binary coloring game played on a graph, where a set of filled vertices can force non-filled vertices to become filled according to a color change rule. A zero forcing set is a set of filled vertices that can force the entire graph; the zero forcing number is the minimum cardinality of a zero forcing set. In 2007, the concept of zero forcing was independently developed in the context of quantum control, where it was known as graph infection~\cite{Burgarth2007}. By 2008, researchers demonstrated that the zero forcing number serves as an upper bound on the maximum nullity of a real symmetric matrix associated with a graph~\cite{AIM2008}. In addition, the combinatorial optimization problem for the zero forcing number is NP-hard, as the corresponding decision problem was shown to be NP-complete~\cite{Aazami2008}. Since then, the study of zero forcing and its applications has gained significant attention~\cite{Hogben2022}.

Related graph parameters, such as propagation time and throttling number, have also been extensively explored in the literature. Various works have investigated known values for specific families of graphs, extreme values, the impacts of certain graph operations, and the NP-hardness of the combinatorial optimization problem corresponding to the throttling number~\cite{Brimkov2019th, Butler2013, Carlson2021, Chilakamarri2012, Hogben2012, Hogben2022}. Moreover, the applications of propagation time and throttling number extend to the controllability of quantum systems and the placement problem of phase measurement units in electrical engineering~\cite{Brueni2005, Severini2008}.

Failed zero forcing sets were introduced in 2014 as subsets of filled vertices that cannot force the entire graph~\cite{Fetcie2015}; the failed zero forcing number is the maximum cardinality of a failed zero forcing set. Since then, the failed zero forcing number has garnered considerable attention, with studies focusing on specific families of graphs, extreme values, the effects of various graph operations, and the NP-hardness of the combinatorial optimization problem corresponding to the failed zero forcing number~\cite{Afzali2024, Fetcie2015, Gomez2021, Gomez2025, Kaudan2023, Shitov2017, Swanson2023}.

In 2018, the forts of a graph were introduced; a fort is a non-empty subset of vertices such that no vertex outside the set has exactly one neighbor within it~\cite{Fast2018}. Forts are closely related to failed zero forcing sets, as a subset of vertices is a fort if and only if its complement is a stalled failed zero forcing set.
Therefore, the NP-hardness of finding a maximum failed zero forcing set implies the NP-hardness of finding a minimum fort of a graph. 
Moreover, every zero forcing set of a graph intersects every fort; that is, the zero forcing sets form a cover for the forts. Thus, forts can turn the iterative graph coloring process of zero forcing into a static set covering problem. This idea has been used in computational approaches for finding the minimum zero forcing set of a graph~\cite{Brimkov2019, Brimkov2021, Cameron2023, Fast2018}. The optimal value of the linear programming relaxation of this set covering model is referred to as the fractional zero forcing number, which is a lower bound on the zero forcing number. The maximum number of pairwise disjoint forts is called the fort number and is also a lower bound on the zero forcing number. 

In this article, we give a complete characterization of minimum forts of the hypercube graph; see Section~\ref{sec:minimum_forts}. From this characterization, we derive the fractional zero forcing number and establish both lower and upper bounds for the fort number of the hypercube graph. When the dimension of the hypercube is a power of two, the fort number and fractional zero forcing number are equivalent to the domination number, total domination number, and open packing number. We note that the fractional zero forcing number of the hypercube is already known, see~\cite[Proposition 3.15]{Cameron2023}. However, their proof relies on the failed zero forcing number of the hypercube from~\cite[Theorem 4.2]{Afzali2024}, which does not provide a characterization of all minimum forts. 

The hypercube graph is highly symmetric; for example, it is vertex and edge transitive, distance regular, and distance transitive~\cite{Mirafzal2025}. Thus, one might expect that its minimum forts are automorphic, which is almost always the case. We show that the minimum forts of the hypercube graph are open neighborhoods, except for dimension four, which has two classes of non-automorphic minimum forts. Even more surprisingly, we show that the minimum zero forcing sets of the hypercube graph are highly non-automorphic; see Section \ref{sec:minimum_zf_sets}. While the characterization of all possible minimum zero forcing sets of the hypercube remains an open problem, we demonstrate that the minimum zero forcing sets of the hypercube possess a rich structure that inherits non-automorphisms from hypercubes of lower dimension.
We note here that, despite its symmetry and simple definition, many classical problems posed on the hypercube have proven challenging to resolve. For example, the domination number, total domination number, crossing number, and number of independent sets in the hypercube graph remain unknown, despite extensive investigation~\cite{Azarija2017, Bertolo2004, Erdos1973, Jenssen2022}.

Lastly, in Section~\ref{sec:minimal_forts}, we present general constructions for the minimal forts of the Cartesian product of graphs. These lead to a partial characterization of the minimal forts of the hypercube.
\section{Preliminaries}\label{sec:prelim}
In this section, we introduce definitions and preliminary results that will be used throughout this article.

\subsection{Graph definitions}
Throughout this article, we let $\mathbb{G}$ denote the set of all finite simple unweighted graphs.
For each $G\in\mathbb{G}$, we have $G=(V,E)$, where $V$ is the vertex set, $E$ is the edge set, and $\{u,v\}\in E$ if and only if $u\neq v$ and there is an edge between vertices $u$ and $v$.
If the context is not clear, we use $V(G)$ and $E(G)$ to specify the vertex set and edge set of $G$, respectively. 
The \emph{order} of $G$ is denoted by $n=\abs{V}$ and the \emph{size} of $G$ is denoted by $m=\abs{E}$.
If $m=\binom{n}{2}$, then we refer to $G$ as the complete graph which we denote by $K_{n}$.
The graph $G$ is \emph{bipartite} if $V$ can be split into two disjoint and independent sets $V_{1}$ and $V_{2}$, that is, every edge of $G$ contains a vertex in $V_{1}$ and a vertex in $V_{2}$.
We refer to $V_{1}$ and $V_{2}$ as \emph{parts} of the bipartite graph $G$. 

Let $G\in\mathbb{G}$.
Given $u\in V$, we define the \emph{neighborhood} of $u$ as 
\[
N(u) = \left\{v\in V\colon\{u,v\}\in E\right\}.
\]
We refer to every $v\in N(u)$ as a \emph{neighbor} of $u$ and we say that $u$ and $v$ are \emph{adjacent}. 
If the context is not clear, we use $N_{G}(u)$ to denote the neighborhood of $u$ in the graph $G$.
The \emph{closed neighborhood} of $u$ is defined by $N[u] = N(u)\cup\{u\}$. 
The degree of $u$ is denoted by $\deg{u}=\abs{N(u)}$.
A \emph{graph automorphism} is a permutation of the vertex set that preserves adjacency.
Two subsets $S,S'\subset V$ are considered \emph{automorphic} if there exists a graph automorphism that maps one set to the other.  

Given vertices $u,v\in V$, we say that $u$ and $v$ are \emph{connected} if there exists a list of distinct vertices $(w_{0},w_{1},\ldots,w_{l})$ such that $u=w_{0}$, $v=w_{l}$, and $\{w_{i},w_{i+1}\}\in E$ for all $i=1,\ldots,l-1$.
Such a list of vertices is called a \emph{path}; in particular, we reference it as a $(u,v)$-path. 
Furthermore, the \emph{length} of the $(u,v)$-path is given by $l$. 
If $u$ is connected to $v$, then the \emph{distance} from $u$ to $v$, denoted $\dist{u}{v}$, is the length of the shortest $(u,v)$-path. For other graph terminology and notation, we refer the reader to \cite{West2001}.

\subsection{Zero forcing}
\emph{Zero forcing} is a binary coloring game on a graph, where vertices are either filled or non-filled. 
In this paper, we denote filled vertices by the color gray and non-filled vertices by the color white. 
An initial set of gray vertices can force white vertices to become gray following a color change rule. 
While there are many color change rules, see~\cite[Chapter 9]{Hogben2022}, we will use the \emph{standard rule} which states that a gray vertex $u$ can force a white vertex $v$ if $v$ is the only white neighbor of $u$.

A \emph{time step} refers to the application of all possible forces that can be done independently of each other. 
Since the vertex set is finite, there comes a point in which no more forcings can be applied. 
If at this point all vertices are gray, then we say that the initial set of gray vertices is a \emph{zero forcing set} of $G$; otherwise, we refer to the initial set of gray vertices as a \emph{failed zero forcing set} of $G$.
The \emph{zero forcing number} of $G$, denoted $\Z(G)$, is the minimum cardinality of a zero forcing set of $G$.
The \emph{failed zero forcing number} of $G$, denoted $\F(G)$, is the maximum cardinality of a failed zero forcing set of $G$.
The \emph{minimum propagation time} of $G$, denoted $\pt(G)$, is the smallest number of time steps necessary for a minimum zero forcing set to force the entire vertex set gray. 
The \emph{maximum propagation time} of $G$, denoted $\PT(G)$, is the largest number of time steps necessary for a minimum zero forcing set to force the entire vertex set gray.

A non-empty subset $F\subseteq V$ is a \emph{fort} if no vertex $u\in V\setminus{F}$ has exactly one neighbor in $F$.
Let $\mathcal{F}_{G}$ denote the collection of all forts of $G$. 
The \emph{fort number} of $G$, denoted $\ft(G)$, is the maximum number of disjoint forts in $\mathcal{F}_{G}$~\cite{Cameron2023}. 
Moreover, the zero forcing sets of $G$ form a cover for $\mathcal{F}_{G}$; in particular, Theorem~\ref{thm:fort_cover} states that a subset of vertices is a zero forcing set if and only if that subset intersects every fort. 
While one direction of this result was originally proven in~\cite[Theorem 3]{Fast2018}, both directions are shown in~\cite[Theorem 8]{Brimkov2019}.
\begin{theorem}[Theorem 8 in~\cite{Brimkov2019}]\label{thm:fort_cover}
Let $G\in\mathbb{G}$.
Then, $S\subseteq V$ is a zero forcing set of $G$ if and only if $S$ intersects every fort in $\mathcal{F}_{G}$.
\end{theorem}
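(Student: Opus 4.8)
The plan is to prove both implications via their contrapositives, exploiting the tight correspondence between the defining property of a fort and the stalling condition of the zero forcing process.

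For the forward direction, I would prove the contrapositive: if $S$ fails to intersect some fort $F$, then $S$ is not a zero forcing set. With $S \cap F = \emptyset$, every vertex of $F$ begins white, and I would show by induction on time steps that no vertex of $F$ is ever forced. The crux is to examine the \emph{first} time step at which some $v \in F$ would be forced, say by a vertex $u$; at that moment every vertex of $F$ is still white. If $u \in F$, then $u$ is itself white and cannot force. If $u \in V \setminus F$, then the fort property guarantees that $u$ has either zero or at least two neighbors in $F$; since $u$ forces $v \in F$ we have $v \in N(u)$, so $u$ has at least two neighbors in $F$, all of them white, contradicting that $v$ is the unique white neighbor of $u$. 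Hence $F$ stays white forever and $S$ cannot force $G$.

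For the reverse direction, I would again argue the contrapositive: if $S$ is not a zero forcing set, I exhibit a fort disjoint from $S$. Running the color change rule to completion yields a stalled gray set $S^{*} \supseteq S$ with $S^{*} \neq V$. Setting $F := V \setminus S^{*}$, which is non-empty, the stalling condition says that no gray vertex has exactly one white neighbor; since the white vertices are precisely those in $F$, no vertex of $V \setminus F$ has exactly one neighbor in $F$, so $F$ is a fort. Because $S \subseteq S^{*}$, we obtain $S \cap F = \emptyset$, which completes the contrapositive.

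The main obstacle is the ``first force'' argument in the forward direction: one must rule out forces originating both inside and outside $F$, and it is precisely the fort property that closes the outside case once we know all of $F$ is still white. The reverse direction is comparatively routine, hinging only on the observation that the complement of the stalled coloring automatically satisfies the fort condition.
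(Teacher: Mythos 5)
Your proof is correct. The paper itself gives no proof of this statement---it is quoted as Theorem 8 of~\cite{Brimkov2019}---and your two contrapositive arguments (the ``first force into $F$'' argument for the forward direction, and taking the complement of the stalled gray set as a fort for the reverse direction) are exactly the standard arguments used in that reference, so there is nothing to correct or compare beyond that.
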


Theorem~\ref{thm:fort_cover} motivates the fort cover model originally introduced in~\cite{Brimkov2019}, see~\eqref{eq:fc-obj}--\eqref{eq:fc-const2}, where $s_{v}$ is a binary variable that indicates whether $v$ is in the zero forcing set.  
\begin{mini!}
    {}{\sum_{v\in V}s_{v}}{}{}\label{eq:fc-obj}
    \addConstraint{\sum_{v\in F}s_{v}}{\geq 1,~\quad\forall F\in    \mathcal{F}_{G}}\label{eq:fc-const1}
    \addConstraint{s}{\in\{0,1\}^{n}}\label{eq:fc-const2}
\end{mini!}

In~\cite{Cameron2023}, the authors introduce the fractional zero forcing number, denoted $\Z^{*}(G)$, as the optimal value of the linear relaxation of the fort cover model. 
That is, $\Z^{*}(G)$ is the minimum total weight on $V(G)$ such that the sum of weights over each fort is at least $1$. 
Furthermore, they show that the following inequality holds for all graphs:
\begin{equation}\label{eq:fzf_bounds}
\ft(G) \leq \Z^{*}(G) \leq \Z(G).
\end{equation}

\subsection{The hypercube graph}
Let $G=(V,E)$ and $G'=(V',E')$ denote graphs in $\mathbb{G}$ such that $V\cap V'=\emptyset$. 
The \emph{Cartesian product} $G\Box G'$ has vertex set $V(G\Box G') = V(G)\times V(G')$ and edge set
\[
E(G\Box G') = \left\{\{(u,u'),(v,v')\}\colon u=v,~u'\in N_{G'}(v')~\textrm{or}~u'=v',~u\in N_{G}(v)\right\}.
\]
The \emph{hypercube graph} $Q_{d}$ of dimension $d$ can be defined recursively as $Q_{1}=K_{2}$ and
\[
Q_{i} = Q_{i-1}\Box Q_{1},~2\leq i\leq d. 
\]
From this definition, it is clear that $Q_{d}$ is a matching graph with order $n=2^{d}$ and size $m=d\cdot2^{d-1}$, see~\cite[Definition 3.11]{Hogben2012}.
Furthermore, by~\cite[Theorem 3.1]{AIM2008}, $\Z(Q_{d})=2^{d-1}$; hence, by~\cite[Proposition 3.12]{Hogben2012}, $\pt(Q_{d})=1$.

It is known that the hypercube graph can be viewed as the undirected Hasse diagram of a boolean lattice, that is, each vertex corresponds to a binary vector of dimension $d$ and two vertices are adjacent if and only if their binary representations differ by exactly one digit~\cite{Foldes1977}. 
From this representation, the following useful observations are clear.
\begin{observation}\label{obs:dreg}
For all $v\in V(Q_{d})$, $\deg{v}=d$. 
\end{observation}
\begin{observation}\label{obs:bipartite}
The graph $Q_{d}$ is bipartite.
In particular, $V(Q_{d})$ can be split into disjoint sets $V_{e}$ and $V_{o}$, where $V_{e}$ corresponds to binary vectors with an even sum and $V_{o}$ corresponds to binary vectors with an odd sum.
Moreover, every edge of $Q_{d}$ contains a vertex in $V_{e}$ and a vertex in $V_{o}$.
\end{observation}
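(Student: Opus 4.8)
The plan is to work directly with the binary-vector representation of $Q_d$ recalled just above from \cite{Foldes1977}, in which each vertex is a vector in $\{0,1\}^{d}$ and two vertices are adjacent precisely when their vectors differ in a single coordinate. For a vertex $v$, write $\sigma(v)$ for the sum of its coordinates (equivalently, the number of $1$'s), and define $V_{e}=\{v\colon\sigma(v)\text{ is even}\}$ and $V_{o}=\{v\colon\sigma(v)\text{ is odd}\}$. Since every binary vector has a well-defined coordinate-sum parity, these two sets are disjoint and together cover $V(Q_d)$, which handles the ``split into disjoint sets'' part of the claim for free.

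The one substantive step is to show that every edge joins a vertex of $V_{e}$ to a vertex of $V_{o}$. If $\{u,v\}\in E(Q_d)$, then $u$ and $v$ differ in exactly one coordinate; changing a single coordinate either increases or decreases the coordinate sum by $1$, so $\sigma(u)$ and $\sigma(v)$ have opposite parities. Hence one of $u,v$ lies in $V_{e}$ and the other in $V_{o}$. In particular no edge has both endpoints in $V_{e}$ or both in $V_{o}$, so each part is independent, and the partition $V(Q_d)=V_{e}\cup V_{o}$ simultaneously establishes that every edge meets both parts and that $Q_d$ is bipartite.

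There is essentially no obstacle here, since everything reduces to the single remark that flipping one bit toggles the parity of the coordinate sum; the representation from \cite{Foldes1977} does all the real work. As a sanity check one should confirm the hypotheses of bipartiteness as phrased in the preliminaries, namely that $V_{e}$ and $V_{o}$ are nonempty (immediate for $d\geq 1$) and independent (immediate from the parity argument). If one preferred to avoid invoking the binary-vector model, the same conclusion follows by induction on $d$ from the recursive definition $Q_i=Q_{i-1}\Box Q_1$ together with the standard fact that the Cartesian product of two bipartite graphs is bipartite; the direct parity argument is cleaner and is exactly what the statement describes.
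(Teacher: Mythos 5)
Your proof is correct and follows exactly the reasoning the paper intends: the paper states this observation as ``clear'' from the binary-vector representation of $Q_d$ cited from \cite{Foldes1977}, and your parity argument (flipping a single bit toggles the parity of the coordinate sum) is precisely the justification left implicit there. Nothing further is needed.
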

\begin{observation}\label{obs:cycles}
All cycles in $Q_{d}$ are even. 
\end{observation}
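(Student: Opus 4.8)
The plan is to deduce this immediately from the bipartite structure established in Observation~\ref{obs:bipartite}. Recall that $V(Q_d)$ splits into $V_e$ and $V_o$ according to the parity of the coordinate sum of each vertex's binary representation, and that every edge of $Q_d$ joins a vertex of $V_e$ to a vertex of $V_o$. Consequently, traversing a single edge always toggles the part a vertex belongs to: a step from $V_e$ lands in $V_o$ and a step from $V_o$ lands in $V_e$.

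First I would fix an arbitrary cycle $(w_0, w_1, \ldots, w_l)$ with $w_l = w_0$ and, after relabeling so that $w_0 \in V_e$, observe that by the alternation above $w_i \in V_e$ precisely when $i$ is even and $w_i \in V_o$ precisely when $i$ is odd. Since the cycle closes up, $w_l = w_0 \in V_e$ forces the index $l$ to be even, so the cycle has even length. This is the whole argument: the part containing $w_i$ flips with every step, so a closed walk can only return to its starting part after an even number of steps.

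Alternatively, I could give a self-contained argument that works directly with the binary-vector representation rather than citing Observation~\ref{obs:bipartite}. Along any cycle, each edge flips exactly one coordinate of the current vertex. For the walk to return to its starting vertex, every coordinate must be restored to its original value, hence flipped an even number of times; writing $c_j$ for the number of times coordinate $j$ is flipped, each $c_j$ is even and the cycle length equals $\sum_{j=1}^{d} c_j$, a sum of even integers, which is therefore even. I do not anticipate any real obstacle here, as the statement is essentially the standard fact that a bipartite graph contains no odd cycle, specialized to $Q_d$. The only point requiring minor care is to phrase the alternation cleanly so that the implication ``closed walk $\Rightarrow$ even length'' is unambiguous; either the parts-based phrasing or the coordinate-flip count makes this precise.
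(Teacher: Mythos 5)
Your proof is correct and matches the paper's intended justification: the paper states this observation as immediate from the binary-vector representation and the bipartition of Observation~\ref{obs:bipartite}, which is precisely your first argument (and your coordinate-flip count is just a self-contained rephrasing of the same parity fact). Nothing further is needed.
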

\begin{observation}\label{obs:neighbors}
Let $u\in V(Q_{d})$.
For all $v,w\in N(u)$, $v\notin N(w)$. 
\end{observation}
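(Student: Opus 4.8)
The plan is to work directly with the binary-vector representation of $Q_d$ recalled just before the statement, in which each vertex is a binary string of length $d$ and two vertices are adjacent precisely when their strings differ in exactly one coordinate. Under this interpretation the claim is that the neighborhood of any vertex is an independent set; equivalently, $Q_d$ is triangle-free.

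First I would fix $u\in V(Q_d)$ and take arbitrary $v,w\in N(u)$. If $v=w$, then $v\notin N(w)$ is immediate since a simple graph has no loops, so I may assume $v\neq w$. Because $v\in N(u)$, its string differs from that of $u$ in exactly one coordinate, say coordinate $i$; likewise $w$ differs from $u$ in exactly one coordinate, say coordinate $j$. Since $v\neq w$ flip single bits of $u$, these coordinates must be distinct, $i\neq j$, for otherwise $v$ and $w$ would flip the same bit of $u$ and coincide. Next I would count where $v$ and $w$ disagree: in coordinate $i$ the vertex $v$ flips the bit of $u$ while $w$ agrees with $u$, so $v$ and $w$ disagree there; symmetrically they disagree in coordinate $j$; and in every remaining coordinate both agree with $u$, hence with each other. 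Thus $v$ and $w$ differ in exactly two coordinates, so by the adjacency rule $\dist{v}{w}=2$ and $v\notin N(w)$.

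I expect no genuine obstacle, as this is a short bit-counting argument. For an even quicker route, one could instead invoke the bipartite structure already recorded: an edge between $v$ and $w$ would make $u,v,w$ a $3$-cycle, contradicting Observation~\ref{obs:cycles} that all cycles in $Q_d$ are even. Equivalently, since $v$ and $w$ each differ from $u$ in a single coordinate, they share the same parity and therefore lie in the same part of the bipartition of Observation~\ref{obs:bipartite}, which by definition contains no edges. Either phrasing yields $v\notin N(w)$.
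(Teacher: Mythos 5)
Your proposal is correct and follows exactly the route the paper intends: the paper states this observation (along with Observations~\ref{obs:dreg}--\ref{obs:cycles}) as immediate from the binary-vector representation of $Q_d$, and your bit-counting argument simply makes that implicit reasoning explicit, showing two neighbors of $u$ differ in exactly two coordinates. The alternative parity/even-cycle argument you mention is also valid but is no different in substance, since Observations~\ref{obs:bipartite} and~\ref{obs:cycles} rest on the same representation.
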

\section{Minimum forts of the hypercube}\label{sec:minimum_forts}
In this section, we characterize the minimum forts of the hypercube graph. 
As a corollary of this characterization, we derive the fractional zero forcing number as well as lower and upper bounds on the fort number of the hypercube graph. 
Note that the fractional zero forcing number of the hypercube graph is already known, see~\cite[Proposition 3.15]{Cameron2023}.
However, their result relies on the failed zero forcing number of the hypercube graph from~\cite[Theorem 4.2]{Afzali2024}, which does not provide a characterization of all minimum forts but rather identifies a failed zero forcing set of cardinality $(2^{d}-d)$ and argues that every set of larger cardinality is a zero forcing set. 
We begin by noting another result from~\cite{Foldes1977}.
\begin{proposition}[Proposition 2 in~\cite{Foldes1977}]\label{prop:paths}
Let $u,v\in V(Q_{d})$.
Then, there exist $d(u,v)!$ distinct shortest paths between $u$ and $v$. 
\end{proposition}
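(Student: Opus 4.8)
The plan is to exploit the binary-vector representation of $Q_d$ and set up a bijection between shortest $(u,v)$-paths and orderings of the coordinates on which $u$ and $v$ differ. Writing $u$ and $v$ as binary vectors, let $D\subseteq\{1,\ldots,d\}$ be the set of coordinates on which they disagree, so that $\abs{D}=k$. First I would confirm that $\dist{u}{v}=k$: since every edge of $Q_d$ flips exactly one coordinate, any $(u,v)$-path must, over its course, flip each coordinate in $D$ an odd number of times and each coordinate outside $D$ an even number of times; this forces at least $k$ flips, and flipping the coordinates of $D$ once each (in any order) realizes a path of length $k$.

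The core step is a counting argument that pins down exactly which flips a shortest path may use. A path of length $k$ consists of exactly $k$ coordinate-flips, while the parity condition above requires each coordinate of $D$ to be flipped an odd (hence at least one) number of times and forbids any coordinate outside $D$ from being flipped an odd number of times. With a budget of exactly $k$ flips there is no slack: each coordinate in $D$ must be flipped exactly once, and no coordinate outside $D$ may be flipped at all. Consequently a shortest path is completely determined by the order in which the $k$ coordinates of $D$ are flipped, i.e.\ by a permutation of $D$.

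To finish, I would verify that this correspondence is a bijection onto the set of shortest paths. Surjectivity follows from the previous step, which shows every shortest path induces such an ordering. To see that each ordering conversely yields a genuine (vertex-distinct) path, note that after the first $j$ flips the current vertex agrees with $u$ exactly on the not-yet-flipped coordinates of $D$, so two prefixes of different lengths produce vertices differing in at least one coordinate; hence the vertices along the path are pairwise distinct and the path is valid in the sense of the paper's definition. Finally, two distinct orderings first disagree at some index $j$, and the vertices reached after $j$ flips then differ in exactly the two coordinates flipped at that step, so distinct orderings yield distinct paths. Counting the orderings gives exactly $k!=\dist{u}{v}!$ shortest paths.

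The main obstacle is the middle step, namely ruling out ``wasteful'' flips. The delicate point is arguing cleanly that a minimum-length path can neither flip a coordinate outside $D$ nor revisit a coordinate of $D$. I expect to handle this purely through the parity-and-count bookkeeping above—the length budget of exactly $k$ flips forces the multiset of flipped coordinates to equal $D$—rather than through any geometric or case-based argument.
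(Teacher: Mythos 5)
Your proof is correct, but note that the paper itself offers no proof of this statement: it is imported verbatim as Proposition~2 of the cited Foldes paper, and the surrounding text only uses it as a black box (to deduce Proposition~\ref{prop:4cycle}). So there is no ``paper proof'' to match; what you have written is a complete, self-contained replacement for the citation. Your argument is the standard one and all the delicate points are handled: the parity bookkeeping correctly shows any $(u,v)$-walk uses at least $\abs{D}$ flips, the exact-budget step correctly rules out both wasted flips outside $D$ and repeated flips inside $D$ (so a geodesic's flip multiset is exactly $D$, each with multiplicity one), and you verify the two facts that are usually glossed over, namely that an ordering of $D$ really produces a path with pairwise distinct vertices (prefixes of different lengths differ from $u$ in different numbers of coordinates) and that distinct orderings produce distinct vertex sequences (at the first disagreement the two partial sums differ in exactly two coordinates). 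The count $k!=\dist{u}{v}!$ then follows from the bijection with permutations of $D$. The only cosmetic caveat is notational: the statement's ``$d(u,v)$'' is the distance $\dist{u}{v}$, which collides with the dimension $d$; your proof disambiguates this correctly by working with $k=\abs{D}$ throughout.
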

From Proposition~\ref{prop:paths}, the following proposition is immediate.
\begin{proposition}\label{prop:4cycle}
Let $u,v\in V(Q_{d})$.
If $N(u)\cap N(v)\neq\emptyset$, then $\abs{N(u)\cap N(v)}=2$.
\end{proposition}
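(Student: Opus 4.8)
The plan is to reduce the statement to a clean application of Proposition~\ref{prop:paths} by showing that the hypothesis $N(u)\cap N(v)\neq\emptyset$ forces $\dist{u}{v}=2$ exactly, and then identifying common neighbors with shortest $(u,v)$-paths. Throughout I assume $u\neq v$; the case $u=v$ is degenerate since then $N(u)\cap N(v)=N(u)$ has $d$ elements by Observation~\ref{obs:dreg}.

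First I would establish the distance. Since $N(u)\cap N(v)\neq\emptyset$, fix a common neighbor $w$, so that $u$--$w$--$v$ is a walk of length two and hence $\dist{u}{v}\leq 2$. To see that $u$ and $v$ are not themselves adjacent, observe that $u,v\in N(w)$, so Observation~\ref{obs:neighbors} (applied with center $w$) gives $u\notin N(v)$. Therefore $\dist{u}{v}\geq 2$, and combining the two inequalities yields $\dist{u}{v}=2$. (Alternatively, one can invoke Observation~\ref{obs:bipartite}: a common neighbor places $u$ and $v$ in the same part, so they cannot be adjacent.)

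Next I would apply Proposition~\ref{prop:paths}, which guarantees exactly $\dist{u}{v}!=2!=2$ distinct shortest $(u,v)$-paths. The final step is a bijection argument: since $\dist{u}{v}=2$, every shortest $(u,v)$-path has the form $u$--$x$--$v$ and is completely determined by its middle vertex $x$, which is precisely an element of $N(u)\cap N(v)$. Conversely, each common neighbor $x$ yields the shortest path $u$--$x$--$v$. This gives a bijection between shortest $(u,v)$-paths and common neighbors, so $\abs{N(u)\cap N(v)}$ equals the number of shortest paths, namely $2$.

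There is no serious obstacle here; the only point requiring care is ruling out $\dist{u}{v}=1$, which is exactly where the triangle-free structure of $Q_d$ (Observation~\ref{obs:neighbors}, or bipartiteness) enters. Once the distance is pinned to $2$, the correspondence between the two shortest paths and the two common neighbors is immediate, and the result follows directly from Proposition~\ref{prop:paths}.
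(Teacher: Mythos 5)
Your proposal is correct and follows essentially the same route as the paper: rule out adjacency of $u$ and $v$ via the triangle-free structure of $Q_d$ (the paper cites bipartiteness, you cite Observation~\ref{obs:neighbors} with bipartiteness as an alternative), conclude $\dist{u}{v}=2$, and then apply Proposition~\ref{prop:paths} together with the correspondence between length-$2$ paths and common neighbors. You are merely more explicit than the paper about the degenerate case $u=v$ and about the path--neighbor bijection, which is a fair bit of added care rather than a different argument.
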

\begin{proof}
If $u$ and $v$ are adjacent, then $N(u)\cap N(v)=\emptyset$ since $Q_{d}$ is bipartite by Observation~\ref{obs:bipartite}. 
Therefore, if $N(u)\cap N(v)\neq\emptyset$ then the distance between $u$ and $v$ is $2$ and Proposition~\ref{prop:paths} implies that there exist $2$ distinct paths of length $2$ between $u$ and $v$. 
Hence, $u$ and $v$ have exactly $2$ common neighbors. 
\end{proof}

Note that Corollary~\ref{prop:4cycle} states that any pair of vertices whose neighborhoods intersect are opposite corners of some $4$-cycle in $Q_{d}$. 
Moreover, this corollary implies that every neighborhood in $Q_{d}$ is a fort of $Q_{d}$.
While this result is already known, see~\cite[Proposition 3.15]{Cameron2023}, we provide a proof here for completeness. 
\begin{proposition}\label{prop:nbhd_fort}
Let $d\geq 2$ and $u\in V(Q_{d})$.
Then, $N(u)$ is a fort of $Q_{d}$.
\end{proposition}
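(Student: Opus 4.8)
The plan is to show that no vertex outside $N(u)$ has exactly one neighbor inside $N(u)$, which is the defining property of a fort. First I would note that $N(u)$ is non-empty since $\deg{u} = d \geq 2$ by Observation~\ref{obs:dreg}, so the set qualifies as a candidate fort. Then I would take an arbitrary vertex $w \in V(Q_d) \setminus N(u)$ and analyze how many neighbors $w$ can have in $N(u)$, splitting into the two natural cases $w = u$ and $w \neq u$.

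The case $w = u$ is the crucial observation that makes $N(u)$ a fort: every neighbor of $u$ lies in $N(u)$, so $u$ has $\deg{u} = d \geq 2$ neighbors in $N(u)$, which is not exactly one. For the remaining case, I would consider $w \neq u$ with $w \notin N(u)$, so in particular $w$ is not adjacent to $u$ and $w \neq u$. Any neighbor of $w$ that also lies in $N(u)$ is precisely a common neighbor of $u$ and $w$, so the number of neighbors $w$ has in $N(u)$ equals $\abs{N(u) \cap N(w)}$. Here I would invoke Proposition~\ref{prop:4cycle}: if this intersection is non-empty, it has size exactly $2$, and otherwise it is empty. In either subcase, $w$ has either $0$ or $2$ neighbors in $N(u)$, never exactly one.

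Combining the two cases, every vertex $w \in V(Q_d) \setminus N(u)$ has a number of neighbors in $N(u)$ belonging to $\{0, 2, d\}$, none of which equals $1$, so $N(u)$ satisfies the fort condition. I expect the main (though modest) obstacle to be handling the vertex $u$ itself carefully, since $u \notin N(u)$ in a simple graph and it is exactly the vertex whose high degree into $N(u)$ must be ruled out as a single-neighbor violator; the hypothesis $d \geq 2$ is what guarantees $u$ does not have exactly one neighbor in $N(u)$. The rest follows cleanly from Proposition~\ref{prop:4cycle}, which already packages the only nontrivial structural fact needed.
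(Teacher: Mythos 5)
Your proof is correct and follows essentially the same route as the paper: non-emptiness via Observation~\ref{obs:dreg}, the case split $w=u$ versus $w\neq u$, and Proposition~\ref{prop:4cycle} to conclude that any other outside vertex has $0$ or $2$ neighbors in $N(u)$. Your explicit remark that $d\geq 2$ is what prevents $u$ itself from being a single-neighbor violator is a nice touch of care that the paper leaves implicit.
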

\begin{proof}
Let $F=N(u)$.
By Observation~\ref{obs:dreg}, $F$ is non-empty since $\abs{F}=\abs{N(u)}=d$.
Now, let $w\in V(Q_{d})\setminus{F}$.
If $w=u$, then $\abs{N(w)\cap F}=\abs{F}=d$. 
Otherwise, Proposition~\ref{prop:4cycle} implies that $N(w)\cap F=\emptyset$ or $\abs{N(w)\cap F}=2$.
Therefore, $F$ is a fort of $Q_{d}$.
\end{proof}

Next, we show that no minimum fort of $Q_{d}$ contains adjacent vertices. 
It is worth noting that there are minimal forts of $Q_{d}$ that contain adjacent vertices. 
For example, the white vertices shown in Figure~\ref{fig:q3_minimalfort} form a minimal fort of $Q_{3}$.
\begin{figure}[ht]
\centering
\resizebox{0.25\textwidth}{!}{%
\begin{tikzpicture}
[nodeFilledDecorate/.style={shape=circle,inner sep=2pt,draw=black,fill=lightgray,thick},%
    nodeEmptyDecorate/.style={shape=circle,inner sep=2pt,draw=black,thick},%
    lineDecorate/.style={black,=>latex',very thick},%
    scale=2.0]
    \foreach \nodename/\x/\y in {101/0.5/2.5, 010/2/0, 110/2.5/0.5, 001/0/2}
    {
        \node (\nodename) at (\x,\y) [nodeFilledDecorate] {\nodename};
    }
    \foreach \nodename/\x/\y in {000/0/0, 011/2/2, 111/2.5/2.5, 100/0.5/0.5}
    {
        \node (\nodename) at (\x,\y) [nodeEmptyDecorate] {\nodename};
    }
    \path\foreach \startnode/\endnode in {000/010, 010/011, 011/001, 001/000, 100/110, 110/111, 111/101, 101/100, 000/100, 010/110, 011/111, 001/101}
    {
        (\startnode) edge[lineDecorate] node {} (\endnode)
    };
\end{tikzpicture}%
}
\caption{A minimal fort (white) of $Q_{3}$ that contains adjacent vertices.}
\label{fig:q3_minimalfort}
\end{figure}
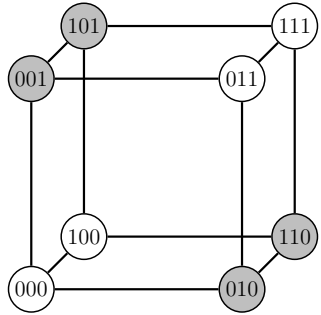
\begin{proposition}\label{prop:minimum_fort_adj}
 Let $d\geq 2$.
 Then, no minimum fort of $Q_{d}$ contains adjacent vertices. 
\end{proposition}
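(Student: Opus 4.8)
The plan is to prove the slightly stronger statement that \emph{any} fort of $Q_{d}$ containing an edge has at least $d+1$ vertices. Since $N(u)$ is a fort of cardinality $d$ for every vertex $u$ (Proposition~\ref{prop:nbhd_fort}), every minimum fort has at most $d$ vertices; hence such a bound immediately forces every minimum fort to be edgeless, which is the claim. So I would fix a fort $F$ containing an edge and, using that $Q_{d}$ is vertex- and edge-transitive, assume without loss of generality that the edge is $\{\mathbf{0},e_{1}\}$, where $\mathbf{0}$ is the all-zero vector and $e_{i}$ is the $i$th standard basis vector, both lying in $F$.

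I would then organize the $2(d-1)$ remaining neighbors $N(\mathbf{0})\setminus\{e_{1}\}=\{e_{i}\colon i\geq 2\}$ and $N(e_{1})\setminus\{\mathbf{0}\}=\{e_{1}+e_{i}\colon i\geq 2\}$ by the $(d-1)$ four-cycles $(\mathbf{0},e_{i},e_{1}+e_{i},e_{1})$ through the edge, which are exactly the four-cycles supplied by Proposition~\ref{prop:4cycle}. Writing $F=\{\mathbf{0},e_{1}\}\sqcup M\sqcup W$, where $M=F\cap\bigcup_{i\geq 2}\{e_{i},e_{1}+e_{i}\}$ and $W$ collects the remaining vertices of $F$, we have $\abs{F}=2+\abs{M}+\abs{W}$, so it suffices to prove $\abs{M}+\abs{W}\geq d-1$.

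Next I would split the indices $i\in\{2,\dots,d\}$ into those with $\{e_{i},e_{1}+e_{i}\}\cap F\neq\emptyset$, which contribute directly and give $\abs{M}\geq (d-1)-\abs{Z}$, and the set $Z$ of indices with $\{e_{i},e_{1}+e_{i}\}\cap F=\emptyset$. For each $i\in Z$, the vertices $e_{i}$ and $e_{1}+e_{i}$ lie outside $F$, yet $e_{i}$ has the neighbor $\mathbf{0}\in F$ and $e_{1}+e_{i}$ has the neighbor $e_{1}\in F$. The fort condition forces each to have a \emph{second} neighbor in $F$, and listing their neighborhoods (and discarding the members already known to be outside $F$) shows these must be a weight-two vertex $e_{i}+e_{k}\in W$ and a weight-three vertex $e_{1}+e_{i}+e_{k'}\in W$ with $k,k'\in\{2,\dots,d\}\setminus\{i\}$.

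The main obstacle is double-counting inside $W$: the single weight-two vertex $e_{a}+e_{b}$ can serve as the forced witness for both index $a$ and index $b$, and likewise for the weight-three witnesses. The key point that makes the count close is that each such witness has exactly two coordinates free to range over $Z$, so it can be charged to at most two indices; hence the distinct weight-two witnesses number at least $\abs{Z}/2$ and the distinct weight-three witnesses number at least $\abs{Z}/2$, and as these two families have opposite Hamming-weight parity (Observation~\ref{obs:bipartite}) they are disjoint, giving $\abs{W}\geq\abs{Z}$. Combining with $\abs{M}\geq (d-1)-\abs{Z}$ yields $\abs{M}+\abs{W}\geq d-1$, so $\abs{F}\geq d+1$, which completes the argument.
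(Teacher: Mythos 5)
Your proposal is correct and follows essentially the same route as the paper: both establish the stronger claim that any fort of $Q_{d}$ containing an edge has at least $d+1$ vertices, by showing that each of the $2(d-1)$ neighbors of the two endpoints needs a second neighbor in the fort and that any additional vertex can supply this for at most two of them (the $4$-cycle structure of Proposition~\ref{prop:4cycle}), and then both conclude via the size-$d$ neighborhood forts of Proposition~\ref{prop:nbhd_fort}. Your coordinatized charging argument, with the parity separation of weight-two and weight-three witnesses, is simply a more explicit rendering of the paper's ``each added vertex resolves at most two fort violations'' count.
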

\begin{proof}
Let $F=\{u,v\}\subset V(Q_{d})$, where $u$ and $v$ are adjacent.
Note that $F$ is not a fort of $Q_{d}$.
Indeed, since $u$ and $v$ are adjacent, Observation~\ref{obs:cycles} implies that $N(u)\cap N(v)=\emptyset$ and every $w\in N(u)\cup N(v)$ satisfies $\abs{N(w)\cap F}=1$. 
Hence, by Observation~\ref{obs:dreg}, there are $\abs{N(u)\cup N(v)}=2d-2$ fort violations of $F$. 
There are two options to repair these fort violations by adding a vertex to $F$: Either we add a vertex from $N(u)\cup N(v)$, or we add a vertex that is not in $N(u)\cup N(v)$ but is adjacent to a vertex in $N(u)\cup N(v)$.
Obviously, by adding a vertex to $F$ we may introduce new fort violations, but we will ignore this effect for now. 

By Proposition~\ref{prop:4cycle}, for every $u'\in N(u)$ there is a unique $v'\in N(v)$ such that $u'$ and $v'$ are adjacent. 
By adding $u'$ to $F$ we resolve exactly $2$ fort violations of $F$, namely $u'$ and $v'$.
Similarly, by adding $v'\in N(v)$ to $F$ we resolve exactly $2$ fort violations of $F$. 
Therefore, adding any vertex in $N(u)\cup N(v)$ to $F$ resolves exactly $2$ fort violations of $F$.

Let $p\in V(Q_{d})\setminus\left(N(u)\cup N(v)\right)$.
Since $u$ and $v$ are adjacent, Observation~\ref{obs:cycles} implies that $p$ cannot be adjacent to vertices in both $N(u)$ and $N(v)$.
Hence, by Proposition~\ref{prop:4cycle}, $p$ is either not adjacent to any vertex in $N(u)\cup N(v)$ or is adjacent to exactly  $2$ vertices in $N(u)\cup N(v)$; in the latter case, $p$ is adjacent to either $u',u''\in N(u)$ or $v',v''\in N(v)$. 
Either way, by adding $p$ to $F$ we resolve exactly $2$ fort violations of $F$, namely $u'$ and $u''$ or $v'$ and $v''$. 
Therefore, adding any vertex not in $N(u)\cup N(v)$ to $F$ will either resolve no fort violations or will resolve exactly $2$ fort violations of $F$. 

Since adding vertices to $F$ may introduce new fort violations, it follows that we must add at least $(d-1)$ vertices to resolve all fort violations of $F$.
So, any fort of $Q_{d}$ with adjacent vertices must have cardinality at least $(d+1)$. 
Therefore, by Proposition~\ref{prop:nbhd_fort}, no minimum fort of $Q_{d}$ contains adjacent vertices. 
\end{proof}

Recall from Observation~\ref{obs:bipartite} that $Q_{d}$ is a bipartite graph.
As a consequence of Proposition~\ref{prop:minimum_fort_adj}, we show that no minimum fort of $Q_{d}$ contains vertices from both parts of $Q_{d}$.
\begin{proposition}\label{prop:even_dist}
Let $d\geq 2$ and let $V_{e}$ and $V_{o}$ denote the parts of $Q_{d}$ from Observation~\ref{obs:bipartite}. 
Then, no minimum fort of $Q_{d}$ contains vertices from both $V_{e}$ and $V_{o}$.
\end{proposition}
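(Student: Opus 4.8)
The plan is to argue by contradiction and exploit the bipartite structure to split any such fort into a strictly smaller fort, contradicting minimality. Suppose $F$ is a minimum fort of $Q_{d}$ that meets both parts, and set $F_{e}=F\cap V_{e}$ and $F_{o}=F\cap V_{o}$, so that both are non-empty and $F=F_{e}\cup F_{o}$ is a disjoint union. The first structural fact I would invoke is Proposition~\ref{prop:minimum_fort_adj}: since $F$ is a minimum fort, it contains no pair of adjacent vertices. Because $Q_{d}$ is bipartite (Observation~\ref{obs:bipartite}), every edge of $Q_{d}$ joins $V_{e}$ to $V_{o}$, so the absence of adjacent vertices in $F$ means precisely that no vertex of $F_{e}$ is adjacent to any vertex of $F_{o}$.

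The heart of the argument is to show that $F_{e}$ is itself a fort. I would verify the defining condition by taking an arbitrary $w\notin F_{e}$ and checking that $\abs{N(w)\cap F_{e}}\neq 1$, splitting into cases according to which part contains $w$. If $w\in V_{e}$, bipartiteness forces $N(w)\subseteq V_{o}$, so $N(w)\cap F_{e}=\emptyset$ and the count is $0$. If $w\in V_{o}$, then $N(w)\subseteq V_{e}$ gives $N(w)\cap F = N(w)\cap F_{e}$; here I would distinguish whether $w$ lies in $F_{o}$ or outside $F$ entirely. For $w\in V_{o}\setminus F_{o}$ we have $w\notin F$, so the fort property of $F$ directly yields $\abs{N(w)\cap F_{e}}=\abs{N(w)\cap F}\neq 1$. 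For $w\in F_{o}$, the no-adjacency fact shows $w$ has no neighbor in $F$ at all, so $N(w)\cap F_{e}=\emptyset$ and again the count is $0$. In every case the count avoids $1$, so $F_{e}$ is a fort, and since $F_{o}\neq\emptyset$ we have $\abs{F_{e}}<\abs{F}$, contradicting the minimality of $F$.

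I expect the main obstacle to be exactly the middle sub-case, namely controlling the neighbors in $F_{e}$ of a vertex $w$ lying in $F_{o}$: such a $w$ sits inside $F$, so the fort condition on $F$ says nothing about it, and a priori it could have a single neighbor in $F_{e}$. The resolution is that this is precisely where minimality enters through Proposition~\ref{prop:minimum_fort_adj}: the independence of $F$ forces $w$ to have zero neighbors in $F$, hence zero in $F_{e}$. It is worth emphasizing that bipartiteness alone would not suffice for the splitting, since a general bipartite fort may have edges between its two parts; the combination of bipartiteness with the minimality-driven independence of $F$ is what makes the decomposition into two forts valid.
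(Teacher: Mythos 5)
Your proposal is correct and follows essentially the same route as the paper: both invoke Proposition~\ref{prop:minimum_fort_adj} to rule out adjacent vertices in $F$, split $F$ into $F_{e}=F\cap V_{e}$ and $F_{o}=F\cap V_{o}$, and show that $F_{e}$ is itself a (strictly smaller) fort, contradicting minimality. Your explicit three-way case analysis on the location of $w$ is just a more spelled-out version of the paper's argument, which reaches the same contradiction by assuming some $u\notin F_{e}$ has exactly one neighbor in $F_{e}$ and deducing $u\in V_{o}\setminus F_{o}$ with no neighbors in $F_{o}$.
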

\begin{proof}
Let $F$ be a fort of $Q_{d}$ that contains vertices from both $V_{e}$ and $V_{o}$. 
If any vertices in $F$ are adjacent, then Proposition~\ref{prop:minimum_fort_adj} says that $F$ is not a minimum fort of $Q_{d}$.
Hence, we may assume that the vertices in $F$ are not adjacent. 

Now, define $F_{e}=F\cap V_{e}$ and $F_{o}=F\cap V_{o}$.
We claim that $F_{e}$ is a fort of $Q_{d}$.
Note that $F_{e}$ is non-empty since $F$ contains vertices in both $V_{e}$ and $V_{o}$.
For the sake of contradiction, suppose that there exists a $u\notin F_{e}$ such that $\abs{N(u)\cap F_{e}}=1$.
Since $Q_{d}$ is bipartite and $F$ has no adjacent vertices, it follows that $u\in V_{o}\setminus{F_{o}}$ and $u$ has no neighbors in $F_{o}$.
However, this implies that $\abs{N(u)\cap F}=1$ which contradicts $F$ being a fort of $Q_{d}$.

Note that a similar argument shows that $F_{o}$ is also a fort of $Q_{d}$. 
Since $F_{e}\subset F$ is a fort of $Q_{d}$, it follows that $F$ is not a minimum fort of $Q_{d}$.
Hence, no minimum fort of $Q_{d}$ contains vertices from both $V_{e}$ and $V_{o}$.
\end{proof}

Proposition~\ref{prop:minimum_fort_adj} implies that the minimum forts of $Q_{d}$ contain only vertices that are an even distance apart from one another. 
The following proposition shows that the maximum distance between any pair of vertices in a minimum fort of $Q_{d}$ is less than $6$. 
\begin{proposition}\label{prop:max_dist6}
Let $d\geq 6$ and let $u,v\in V(Q_{d})$ such that $\dist{u}{v}\geq 6$.
Then, there is no minimum fort of $Q_{d}$ that contains both $u$ and $v$. 
\end{proposition}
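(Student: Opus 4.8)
The plan is to argue by contradiction. Suppose $F$ is a minimum fort of $Q_{d}$ that contains both $u$ and $v$ with $\dist{u}{v}\geq 6$. By Proposition~\ref{prop:nbhd_fort}, the open neighborhood of any vertex is a fort of cardinality $d$, so every minimum fort has cardinality at most $d$. Hence it suffices to show that $\abs{F}\geq d+2$, which is the contradiction we seek. Throughout, I would use that a minimum fort has no adjacent vertices (Proposition~\ref{prop:minimum_fort_adj}), so in particular $N(w)\cap F=\emptyset$ for every $w\in F$, and that $F$ lies in a single part of $Q_{d}$ (Proposition~\ref{prop:even_dist}).

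Next I would run a local covering argument anchored at $u$. Identify $V(Q_{d})$ with binary vectors and place $u$ at the all-zeros vector, so that $N(u)=\{e_{1},\dots,e_{d}\}$ and, by the preceding paragraph, none of these unit vectors lies in $F$. Since $F$ is a fort and $u\in N(e_{i})\cap F$, each $e_{i}$ must have a \emph{second} neighbor $w\in F$; as $w$ is adjacent to the weight-one vector $e_{i}$ and $w\neq u$, the vertex $w$ has weight two, that is, $\dist{u}{w}=2$. Let $A=\{w\in F\colon \dist{u}{w}=2\}$. By Proposition~\ref{prop:4cycle}, each $w\in A$ shares exactly two neighbors with $u$ (the opposite corners of a $4$-cycle through $u$ and $w$), so each $w\in A$ ``covers'' exactly two of the unit vectors $e_{i}$. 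Since every $e_{i}$ must be covered, we obtain $2\abs{A}\geq d$, and hence $\abs{A}\geq\ceil{d/2}$.

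By vertex-transitivity, the identical argument re-centered at $v$ produces a set $B=\{w\in F\colon \dist{v}{w}=2\}$ with $\abs{B}\geq\ceil{d/2}$. The hypothesis $\dist{u}{v}\geq 6$ is exactly what forces the four subsets $\{u\}$, $\{v\}$, $A$, and $B$ of $F$ to be pairwise disjoint: a vertex in $A\cap B$ would be within distance two of both $u$ and $v$, giving $\dist{u}{v}\leq 4$ by the triangle inequality, while $u\in B$ or $v\in A$ would force $\dist{u}{v}=2$, each contradicting $\dist{u}{v}\geq 6$. Because $u$ and $v$ lie in the same part of $Q_{d}$ (Observation~\ref{obs:bipartite}), their distance is even, so the relevant threshold is the first even integer exceeding $4$, namely $6$. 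Summing over the four disjoint pieces then gives
\[
\abs{F}\geq 1+1+\abs{A}+\abs{B}\geq 2+2\ceil{d/2}\geq d+2,
\]
the desired contradiction.

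I expect the main obstacle to be the local lower bound $\abs{A}\geq\ceil{d/2}$: one must verify carefully that the fort condition forces every unit-neighbor of $u$ to be covered by a genuine weight-two element of $F$, and that each such element covers exactly two neighbors, so that the covering cannot be done with fewer than $\ceil{d/2}$ vertices. Once the two local bounds are established, the disjointness step—where the numeric value $6$ enters—is a short triangle-inequality computation, and the concluding count is immediate.
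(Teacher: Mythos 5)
Your proof is correct. It runs on the same combinatorial engine as the paper's---Proposition~\ref{prop:4cycle}, which guarantees that a vertex at distance two from $u$ is adjacent to exactly two members of $N(u)$, so any single vertex can cover at most two of them---but you package the count genuinely differently. The paper starts from the bare pair $\{u,v\}$, counts the $2d$ fort violations in $N(u)\cup N(v)$, and argues that each vertex added to repair them fixes at most two (exactly one when the added vertex itself lies in $N(u)\cup N(v)$, which is one of the two places $\dist{u}{v}\geq 6$ is used); this establishes the stronger intermediate claim that \emph{any} fort containing $u$ and $v$ has cardinality at least $d+2$, but the bookkeeping is informal (``by adding a vertex to $F$ we may introduce new fort violations, but we will ignore this effect for now''). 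You instead restrict to minimum forts from the outset, so Proposition~\ref{prop:minimum_fort_adj} gives $N(u)\cap F=N(v)\cap F=\emptyset$, which eliminates the repair-by-neighbors case entirely; the fort condition then forces every $e_{i}$ to have a second, weight-two neighbor in $F$, and your incidence bound $2\abs{A}\geq d$, combined with the pairwise disjointness of $\{u\}$, $\{v\}$, $A$, $B$ (the other place $\dist{u}{v}\geq 6$ enters, via the triangle inequality), yields $\abs{F}\geq 2+2\ceil{d/2}\geq d+2$ as a static packing bound inside $F$. What the paper's route buys is the stronger conclusion about arbitrary forts containing $u$ and $v$; what yours buys is rigor, since there is no repair process to track---just four disjoint subsets of $F$ whose cardinalities add. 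One small tidy-up: your appeal to Proposition~\ref{prop:even_dist} is never actually used (the hypothesis $\dist{u}{v}\geq 6$ does all the work; the parity discussion is only motivation for why $6$ is the relevant threshold), so it can be dropped.
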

\begin{proof}
Let $F=\{u,v\}$.
Note that $F$ is not a fort of $Q_{d}$ since every $w\in N(u)\cup N(v)$ satisfies $\abs{N(w)\cap F}=1$. 
Hence, by Observation~\ref{obs:dreg}, there are $\abs{N(u)\cup N(v)}=2d$ fort violations of $F$.
There are two options to repair these fort violations by adding a vertex to $F$: Either we add a vertex from $N(u)\cup N(v)$, or we add a vertex that is not in $N[u]\cup N[v]$ but is adjacent to a vertex in $N[u]\cup N[v]$.
Obviously, by adding a vertex to $F$ we may introduce new fort violations, but we will ignore this effect for now.

Let $u'\in N(u)$.
By Observation~\ref{obs:neighbors}, $u'$ is not adjacent to any vertex in $N(u)$.
Furthermore, since $\dist{u}{v}\geq 6$, $u'$ is not adjacent to any vertex in $N(v)$. 
Therefore, adding $u'$ to $F$ resolves exactly $1$ fort violation of $F$, namely $u'$. 
Similarly, adding $v'\in N(v)$ to $F$ resolves exactly $1$ fort violation of $F$, namely $v'$. 

Let $p\in V(Q_{d})\setminus\left(N[u]\cup N[v]\right)$. 
Since $\dist{u}{v}\geq 6$, it follows that $p$ cannot be adjacent to vertices in both $N(u)$ and $N(v)$.
Hence, by Proposition~\ref{prop:4cycle}, $p$ is either not adjacent to any vertex in $N(u)\cup N(v)$ or is adjacent to exactly $2$ vertices in $N(u)\cup N(v)$; in the latter case, $p$ is adjacent to either $u',u''\in N(u)$ or $v',v''\in N(v)$. 
Either way, by adding $p$ to $F$ we resolve exactly $2$ fort violations of $F$, namely $u'$ and $u''$ or $v'$ and $v''$. 
Therefore, adding any vertex not in $N[u]\cup N[v]$ to $F$ will either not resolve any fort violations or will resolve exactly $2$ fort violations of $F$.

Since adding vertices to $F$ may introduce new fort violations, it follows that we must add at least $d$ vertices to resolve all fort violations of $F$.
So, any fort of $Q_{d}$ that contains $u$ and $v$ must have cardinality at least $(d+2)$.
Therefore, by Proposition~\ref{prop:nbhd_fort}, no minimum fort of $Q_{d}$ contains vertices a distance of at least $6$ apart.
\end{proof}

Next, we show that any fort of $Q_{d}$ that contains vertices a distance of $4$ apart must contain at least $d$ vertices.
Moreover, if $d\geq 5$, then any fort containing vertices a distance of $4$ apart must contain at least $(d+1)$ vertices. 
Hence, $Q_{4}$ is the only hypercube graph that has minimum forts with vertices a distance of $4$ apart, see Figure~\ref{fig:q4_minfort}.
\begin{proposition}\label{prop:max_dist4}
Let $d\geq 4$ and let $u,v\in V(Q_{d})$ such that $\dist{u}{v}=4$.
If $d=4$, then any fort $F$ containing $u$ and $v$ satisfies $\abs{F}\geq d$.
If $d\geq 5$, then any fort $F$ containing $u$ and $v$ satisfies $\abs{F}>d$. 
\end{proposition}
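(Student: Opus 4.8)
The plan is to mirror the counting argument used in Propositions~\ref{prop:minimum_fort_adj} and~\ref{prop:max_dist6}, but now track how many fort violations a single added vertex can resolve when $\dist{u}{v}=4$. Starting from $F=\{u,v\}$, Observation~\ref{obs:neighbors} and the bipartite structure guarantee $N(u)\cap N(v)=\emptyset$, so every $w\in N(u)\cup N(v)$ has $\abs{N(w)\cap F}=1$, giving $2d$ fort violations. The key is to classify, vertex by vertex, how many of these $2d$ violations can be simultaneously resolved by adding a new vertex $p$ to $F$. The crucial difference from the distance-$6$ case is that when $\dist{u}{v}=4$ there can exist vertices adjacent to one neighbor of $u$ and one neighbor of $v$ at the same time, so a single added vertex might resolve violations on both sides.

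First I would use Proposition~\ref{prop:4cycle} to bound the overlap precisely. A vertex $p\notin N[u]\cup N[v]$ that is adjacent to some $u'\in N(u)$ lies at distance $2$ from $u$; since $\dist{u}{v}=4$, any such $p$ can sit at distance $2$ from $v$ as well, and by Proposition~\ref{prop:4cycle} it is adjacent to exactly two vertices of $N(v)$ or to none. The heart of the argument is to show that the maximum number of the $2d$ violations that any single vertex can resolve is bounded by a small constant (at most $2$ on each side, hence at most $4$ total in the most favorable configuration, but with the distance-$4$ geometry forcing the simultaneous ``bridge'' vertices to be rare). I would enumerate the three types of candidate vertices exactly as in the previous proofs: vertices of $N(u)$, vertices of $N(v)$, and vertices outside $N[u]\cup N[v]$, and for each type count the resolved violations. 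Letting $k=\abs{N(u)\cap(\text{distance-}2\text{ shell of }v)}$ be the number of ``bridging'' positions, a careful count should show that resolving all $2d$ violations requires adding at least $d-2$ vertices, yielding $\abs{F}\geq d$; and when $d\geq 5$ the extra slack in the shells forces an additional vertex, giving $\abs{F}>d$.

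The main obstacle will be handling the new fort violations introduced by the added vertices, which the earlier proofs deliberately ignore with the phrase ``we will ignore this effect for now'' and then recover via a final inequality. Here the bookkeeping is more delicate because a bridging vertex resolves violations on both sides yet simultaneously creates its own new violations among its unfilled neighbors, so the naive division of $2d$ by the per-vertex resolution rate is not immediately tight. To make the bound rigorous I would argue that the most efficient repair strategy is to fill an entire neighborhood, invoking Proposition~\ref{prop:nbhd_fort}: since $N(u)$ is already a fort of size $d$, any competing fort containing both $u$ and $v$ must either match or exceed this size, and the distance-$4$ constraint prevents the $2d$ violations from being resolved by fewer than $d$ additions without recreating a violation. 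The sharper $d\geq 5$ case then follows by observing that when $d\geq 5$ the two neighborhoods $N(u)$ and $N(v)$ are too large to be covered by the limited bridging vertices, so strictly more than $d$ vertices are forced.

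For the $d=4$ claim I would complement the lower bound with the explicit minimum fort exhibited in Figure~\ref{fig:q4_minfort} (an open neighborhood has size $d=4$, and one can verify a fort of size $4$ realizing $\dist{u}{v}=4$), confirming the bound $\abs{F}\geq d$ is achieved and cannot be improved to a strict inequality. This matches the paper's stated conclusion that $Q_{4}$ is the unique hypercube admitting minimum forts with vertices at distance $4$.
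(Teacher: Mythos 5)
Your overall strategy---counting the $2d$ fort violations of $\{u,v\}$ and bounding how many each added vertex can resolve---is exactly the paper's approach, but your proposal leaves the decisive counts as assertions (``a careful count should show'') and the one argument you offer to make things rigorous is circular. Invoking Proposition~\ref{prop:nbhd_fort} to claim that ``since $N(u)$ is already a fort of size $d$, any competing fort containing both $u$ and $v$ must either match or exceed this size'' is a non sequitur: the existence of a fort of size $d$ places no lower bound whatsoever on the sizes of other forts, and such a lower bound is precisely what this proposition is supposed to establish. What the paper actually does at this point is the concrete count you skipped: a bridge vertex $p\notin N[u]\cup N[v]$ adjacent to both neighborhoods resolves exactly $4$ violations (two in $N(u)$ and two in $N(v)$, by Proposition~\ref{prop:4cycle}), but the only violations reachable by bridge vertices are the $4$ vertices of $N(u)$ at distance $3$ from $v$ and the $4$ vertices of $N(v)$ at distance $3$ from $u$; these $8$ can be resolved by two bridge vertices $p_{1},p_{2}$ at distance $2$ from both $u$ and $v$ with $\dist{p_{1}}{p_{2}}=4$, after which the remaining $2d-8$ violations are resolvable at a rate of at most $2$ per added vertex. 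This yields at least $2+(2d-8)/2=d-2$ additions, hence $\abs{F}\geq d$. Note also that your bookkeeping quantity $k=\abs{N(u)\cap(\textrm{distance-}2\textrm{ shell of }v)}$ is identically zero---neighbors of $u$ lie at odd distance ($3$ or $5$) from $v$---so as defined it does not count the bridging positions you intend.

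The second gap is the strict inequality for $d\geq 5$, which you attribute to unspecified ``extra slack.'' The generic count above gives exactly $d$ for every $d\geq 4$, so it cannot by itself separate $d=4$ from $d\geq 5$; an additional structural argument is required, and the paper supplies it. For $d=5$ the two remaining violations are $u'\in N(u)$ and $v'\in N(v)$ with $\dist{u'}{v}=\dist{v'}{u}=5$; no single vertex is adjacent to both (their mutual distance is $4$ or $6$, never $2$), so two further vertices are needed, giving $\abs{F}\geq 6>d$. For $d\geq 6$, the leftover violations in $N(u)$ lie at distance $5$ from $v$, and the unique vertex $w$ that resolves two of them at once satisfies $\dist{w}{v}=6$, which runs into the distance-$6$ obstruction of Proposition~\ref{prop:max_dist6}. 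Without this case analysis, the claim $\abs{F}>d$ for $d\geq 5$ is unsupported. Your final paragraph exhibiting the fort of Figure~\ref{fig:q4_minfort} shows the bound is sharp for $d=4$, which is fine but not needed, since the statement only asserts lower bounds.
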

\begin{proof}
Let $F=\{u,v\}\subset V(Q_{d})$.
Note that $F$ is not a fort of $Q_{d}$ since every $w\in N(u)\cup N(v)$ satisfies $\abs{N(w)\cap F}=1$. 
Hence, by Observation~\ref{obs:dreg}, there are $\abs{N(u)\cup N(v)}=2d$ fort violations of $F$.
There are two options to repair these fort violations by adding a vertex to $F$: Either we add a vertex from $N(u)\cup N(v)$, or we add a vertex that is not in $N[u]\cup N[v]$ but is adjacent to a vertex in $N[u]\cup N[v]$.
Obviously, by adding a vertex to $F$ we may introduce new fort violations, but we will ignore this effect for now.

Let $u'\in N(u)$.
By Observation~\ref{obs:neighbors}, $u'$ is not adjacent to any vertex in $N(u)$.
Furthermore, since $\dist{u}{v}=4$, $u'$ is not adjacent to any vertex in $N(v)$. 
Therefore, adding $u'$ to $F$ resolves exactly $1$ fort violation of $F$, namely $u'$.
Similarly, adding $v'\in N(v)$ to $F$ resolves exactly $1$ fort violation of $F$, namely $v'$. 

Let $p\in V(Q_{d})\setminus\left(N[u]\cup N[v]\right)$.
Then, by Proposition~\ref{prop:4cycle}, $p$ may be adjacent to no vertices in $N(u)\cup N(v)$, exactly $2$ vertices in $N(u)$ or $2$ vertices in $N(v)$, or exactly $2$ vertices in $N(u)$ and $2$ vertices in $N(v)$. 
In the latter case, $p$ is adjacent to $u',u''\in N(u)$ and $v',v''\in N(v)$ such that $d(u',v)=d(u'',v)=3$ and $d(v',u)=d(v'',u)=3$.
There are exactly $4$ vertices in $N(u)$ a distance of $3$ away from $v$; similarly, there are exactly $4$ vertices in $N(v)$ a distance of $3$ away from $u$.
Moreover, all $8$ of these fort violations can be resolved by adding $p_{1},p_{2}\in V(Q_{d})\setminus\left(N[u]\cup N[v]\right)$ to $F$, where $p_{1}$ and $p_{2}$ are a distance of $2$ away from both $u$ and $v$ and $\dist{p_{1}}{p_{2}}=4$.
After adding $p_{1}$ and $p_{2}$ to $F$, we still have $2d-8$ fort violations from $N(u)\cup N(v)$.
Furthermore, every vertex that can be added at this point resolves at most $2$ of these fort violations.
Hence, we must add at least $d-4$ more vertices to $F$ in order to resolve all fort violations.

Suppose that $d=5$.
Then, there are exactly two fort violations that remain from $N(u)\cup N(v)$, which we denote by $u'\in N(u)$ and $v'\in N(v)$.
Since $d(u',v)=d(v',u)=5$, these fort violations cannot be resolved by a single vertex. 
Hence, when $d=5$, at least $4$ vertices must be added to $F$ in order resolve all fort violations. 

Finally, suppose that $d\geq 6$. 
Then, there are at least two fort violations that remain from $N(u)$, which we denote by $u',u''$.
Note that $d(u',v)=d(u'',v)=5$. 
Moreover, by Proposition~\ref{prop:4cycle}, there exists a unique $w$ such that $N(w)\cap N(u)=\{u',u''\}$; hence, we can resolve two fort violations with $w$.
However, $d(w,v)=6$ and Proposition~\ref{prop:max_dist6} implies that no minimum fort of $Q_{d}$ will contain both $w$ and $v$.
\end{proof}

In Figure~\ref{fig:q4_minfort}, we show a minimum fort of $Q_{4}$ that is not the neighborhood of a single vertex.
By Propositions~\ref{prop:even_dist},~\ref{prop:max_dist6}, and~\ref{prop:max_dist4}, there are no such minimum forts of $Q_{d}$ for $d\geq 2$, $d\neq 4$.
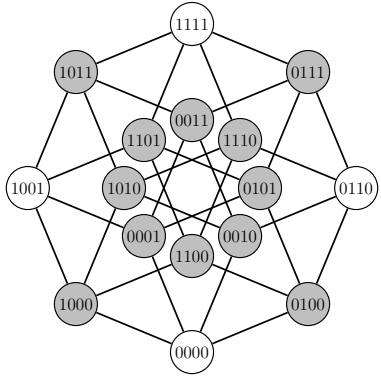
\begin{figure}[ht]
\centering
\resizebox{0.3\textwidth}{!}{%
\begin{tikzpicture}
[nodeFilledDecorate/.style={shape=circle,inner sep=2pt,draw=black,fill=lightgray,thick},%
    nodeEmptyDecorate/.style={shape=circle,inner sep=2pt,draw=black,thick},%
    lineDecorate/.style={black,=>latex',very thick},%
    scale=4.0]
    \foreach \nodename/\x/\y in {0111/0.7071/0.7071, 0100/0.7071/-0.7071, 0101/0.4142/0, 0011/0/0.4142, 0001/-0.2928/-0.2928, 0010/0.2928/-0.2928, 1011/-0.7071/0.7071, 1000/-0.7071/-0.7071, 1110/0.2928/0.2928, 1101/-0.2928/0.2928, 1010/-0.4142/0, 1100/0/-0.4142}
    {
        \node (\nodename) at (\x,\y) [nodeFilledDecorate] {\nodename};
    }
    \foreach \nodename/\x/\y in {0000/0/-1, 1111/0/1, 0110/1/0, 1001/-1/0}
    {
        \node (\nodename) at (\x,\y) [nodeEmptyDecorate] {\nodename};
    }
    \path\foreach \startnode/\endnode in {0110/0111, 0110/0100, 0110/1110, 0110/0010, 0111/1111, 0111/0101, 0111/0011, 1111/1011, 1111/1110, 1111/1101, 1011/1001, 1011/0011, 1011/1010, 1001/1000, 1001/1101, 1001/0001, 1000/0000, 1000/1010, 1000/1100, 0000/0100, 0000/0001, 0000/0010, 0100/0101, 0100/1100, 0101/1101, 0101/0001, 1110/1010, 1110/1100, 0011/0001, 0011/0010, 1101/1100, 1010/0010}
    {
        (\startnode) edge[lineDecorate] node {} (\endnode)
    };
\end{tikzpicture}%
}
\caption{A minimum fort (white) of $Q_{4}$ that is not the neighborhood of a single vertex.}
\label{fig:q4_minfort}
\end{figure}

The only case left to consider is where all vertices in the fort are a distance of $2$ apart.
Of course, this fort structure is only achieved when all vertices are members of the neighborhood of a central vertex. 
Therefore, we have the following result.
\begin{theorem}\label{thm:min_fort}
Let $d\geq 2$, $d\neq 4$, and $F\subseteq V(Q_{d})$.
Then, $F$ is a minimum fort of $Q_{d}$ if and only if $F=N(v)$ for some $v\in Q_{d}$. 
\end{theorem}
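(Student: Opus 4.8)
The statement has two directions, and the reverse one is the easy half: by Proposition~\ref{prop:nbhd_fort} together with Observation~\ref{obs:dreg}, every open neighborhood $N(v)$ is a fort of cardinality exactly $d$, so the minimum fort cardinality is at most $d$. Hence it suffices to prove the forward direction, that every minimum fort $F$ equals some $N(v)$; once this is established, every minimum fort has cardinality $d$, so the minimum fort cardinality equals $d$ and each $N(v)$ is in turn a minimum fort, giving the reverse direction for free.

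The plan for the forward direction is to use the preceding propositions to force all pairwise distances in a minimum fort to equal $2$, and then to argue that a fort with this property must be a neighborhood. So let $F$ be a minimum fort, whence $\abs{F}\le d$ by the paragraph above. By Proposition~\ref{prop:minimum_fort_adj}, $F$ contains no adjacent vertices, and by Proposition~\ref{prop:even_dist}, $F$ lies entirely in one part of the bipartition; consequently every pair of distinct vertices of $F$ is at a positive even distance. I would then eliminate every even distance other than $2$. Distances of at least $6$ are impossible: for $d\ge 6$ this is Proposition~\ref{prop:max_dist6}, while for $d\le 5$ the diameter of $Q_d$ is less than $6$. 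Distance $4$ is ruled out as well: for $d\ge 5$, Proposition~\ref{prop:max_dist4} shows any fort containing a distance-$4$ pair has more than $d$ vertices, contradicting $\abs{F}\le d$, whereas for $d\in\{2,3\}$ no two vertices in a single part are at distance $4$ since the diameter is at most $3$. As $d=4$ is excluded by hypothesis, only pairwise distance $2$ survives.

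The crux, and the step I expect to be the main obstacle, is then the following claim: a minimum fort $F$ all of whose pairwise distances equal $2$ must be an open neighborhood. Here I would pass to the binary-vector model and use vertex-transitivity of $Q_d$ to assume $\mathbf{0}\in F$. Every other vertex of $F$ then has Hamming weight $2$, say with support $\{i,j\}$, and the distance-$2$ condition between two such vertices is equivalent to their supports meeting in exactly one coordinate. A family of $2$-element sets that pairwise intersect in exactly one element is, by an elementary classification, either a star in which all sets share a common coordinate, or a single triangle $\{a,b\},\{b,c\},\{a,c\}$. In the star case with common coordinate $i$, one checks that $F\subseteq N(e_i)$, and the fort condition forces equality: if some $e_i+e_k$ were missing, then the weight-$1$ vertex $e_k$ would have $\mathbf{0}$ as its unique neighbor in $F$, violating the fort property, so $F=N(e_i)$. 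The triangle case is eliminated by the constraints already in hand: for $d\ge 4$ any coordinate $k$ outside $\{a,b,c\}$ produces a weight-$1$ vertex $e_k$ whose only neighbor in $F$ is $\mathbf{0}$, so $F$ is not a fort at all, while for $d=3$ the triangle is the entire part, of cardinality $4>3=d$, contradicting $\abs{F}\le d$.

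Combining these steps shows that every minimum fort equals some $N(v)$, which completes the forward direction and, as noted, also the reverse direction and the equality of the minimum fort cardinality with $d$. The only genuinely new work beyond invoking the earlier propositions is the distance-$2$ classification, where the star/triangle dichotomy for pairwise intersecting $2$-sets is the key combinatorial input; everything else is bookkeeping across the small dimensions $d\le 5$ and the exclusion of $d=4$. I would flag the triangle subcase as the point most easily overlooked, since a distance-$2$ fort need not be a neighborhood in general (the even part of $Q_3$ is a counterexample), and it is precisely minimality, rather than the distance structure alone, that rules it out.
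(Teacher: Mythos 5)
Your proposal is correct, and its skeleton is the same as the paper's: you invoke Propositions~\ref{prop:minimum_fort_adj}, \ref{prop:even_dist}, \ref{prop:max_dist6}, and~\ref{prop:max_dist4}, together with Proposition~\ref{prop:nbhd_fort}, to reduce a minimum fort to a set whose vertices are pairwise at distance $2$. The genuine difference is the final step. The paper simply asserts that such a set ``is the neighborhood of a single vertex'' (the remark preceding the theorem even says ``of course''), whereas you prove it: normalizing $\mathbf{0}\in F$ by vertex-transitivity, the supports of the remaining weight-$2$ vertices pairwise meet in exactly one coordinate, hence form a star or a triangle; the star case is forced up to the full neighborhood $N(e_{i})$ by the fort condition (a missing $e_{i}+e_{k}$ leaves $e_{k}$ with $\mathbf{0}$ as its unique neighbor in $F$), and the triangle case is eliminated because for $d\geq 4$ it is not even a fort (any coordinate $k$ outside the triangle gives $e_{k}$ the unique neighbor $\mathbf{0}$), while for $d=3$ it has cardinality $4>d$. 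This fills what is arguably a real gap in the paper's proof: as you observe, the distance structure alone does not imply the conclusion, since the even part of $Q_{3}$ (the $d=3$ triangle configuration) is a fort with all pairwise distances equal to $2$ that is not an open neighborhood; it is the fort property plus minimality that forces the star. Your explicit diameter arguments for $d\leq 5$ also make precise why Propositions~\ref{prop:max_dist6} and~\ref{prop:max_dist4} may be cited only in the ranges where they are stated, which the paper leaves implicit. The only bookkeeping point you elide is that $\abs{F}\geq 2$ (a singleton is never a fort since $Q_{d}$ has no isolated vertices), so that the star/triangle dichotomy has a nonempty family to act on; this is immediate and does not affect correctness.
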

\begin{proof}
Suppose that $F$ is a minimum fort of $Q_{d}$.
Then, by Proposition~\ref{prop:even_dist}, $F$ only contains vertices that are an even distance apart from each other. 
By Proposition~\ref{prop:max_dist6}, $F$ does not contain vertices that are a distance of $6$ or more apart. 
If $F$ contains vertices of distance $4$ apart, then by Proposition~\ref{prop:max_dist4}, $\abs{F}>d$ since $d\neq 4$. 
However, if $\abs{F}>d$, then $F$ cannot be a minimum fort since all the neighborhoods of $Q_{d}$ are forts by Proposition~\ref{prop:nbhd_fort}. 
Therefore, all vertices in $F$ are a distance of $2$ apart from each other, which means $F$ is the neighborhood of a single vertex. 

Conversely, suppose that $F=N(v)$ for some $v\in V(Q_{d})$.
Then, by Proposition~\ref{prop:nbhd_fort}, $F$ is fort of $Q_{d}$.
Moreover, by Propositions~\ref{prop:even_dist},~\ref{prop:max_dist6}, and~\ref{prop:max_dist4}, it follows that $F$ is a minimum fort of $Q_{d}$.
\end{proof}

The following proposition characterizes the minimum forts on $Q_{4}$.
\begin{proposition}\label{prop:min_fort4}
Let $F\subseteq V(Q_{4})$. 
Then, $F$ is a minimum fort if and only if $F=N(v)$ for some $v\in Q_{4}$ or there is an automorphism of $Q_{4}$ that maps $F$ to the set shown in Figure~\ref{fig:q4_minfort}.
\end{proposition}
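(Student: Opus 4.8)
The plan is to reduce, via the propositions already proved, to classifying the $4$-element forts contained in a single part of $Q_4$, and then to carry out that classification with a coordinate-covering argument (identifying each vertex with its binary vector, as in the excerpt). Since translation by a fixed binary vector is an automorphism of $Q_4$, I may assume a minimum fort $F$ lies in the even part $V_e$: Propositions~\ref{prop:minimum_fort_adj} and~\ref{prop:even_dist} place every minimum fort in one part with no adjacent vertices, so all pairwise distances in $F$ are even, hence (as $Q_4$ has diameter $4$) equal to $2$ or $4$. I would first confirm that a minimum fort has exactly $4$ vertices. At most $4$ is Proposition~\ref{prop:nbhd_fort}. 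For at least $4$, any distance-$4$ pair already requires size $4$ by Proposition~\ref{prop:max_dist4}; and for a set whose pairwise distances are all $2$, fixing $a\in F$ and inspecting the four odd neighbors $a+e_i$ of $a$ (where $e_i$ is the $i$-th coordinate vector) shows that, since each such neighbor already has $a$ as a neighbor in $F$ and so needs a second one, every coordinate $i$ must lie in the support of some weight-$2$ difference $f-a$ with $f\in F\setminus\{a\}$. As the distance-$2$ condition forces any two such differences to share exactly one coordinate, at most two differences cover at most three coordinates, so a set of size $2$ or $3$ leaves some $a+e_i$ with a unique neighbor in $F$. Thus the minimum cardinality of a fort is exactly $4$.

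With $|F|=4$ and $F\subseteq V_e$, I would split on the number $k\in\{0,1,2\}$ of antipodal pairs contained in $F$ (three pairs would need six vertices). If $k\geq 1$, say $\{u,\overline{u}\}\subseteq F$ with remaining elements $x,y$, I would run the covering argument from $u$: each odd neighbor $u+e_i$ is non-adjacent to $\overline{u}$ (they are at distance $3$), so it must be adjacent to $x$ or $y$, forcing the weight-$2$ differences $x-u$ and $y-u$ to cover all four coordinates. Two weight-$2$ vectors cover four coordinates only if their supports are disjoint, so the bitwise sum $x+y$ is the all-ones vector, i.e.\ $y=\overline{x}$. Hence a fort containing an antipodal pair is exactly a union of two antipodal pairs ($k=2$), which rules out $k=1$. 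Conversely, I would verify that any union of two antipodal pairs is a fort and is automorphic to the set in Figure~\ref{fig:q4_minfort}: translating one pair to $\{0000,1111\}$ and applying a coordinate permutation sending the other to $\{0110,1001\}$ exhibits the automorphism, and a direct check (every odd vertex has exactly two neighbors in the set) shows it is a fort.

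The remaining case $k=0$ is that $F$ meets each antipodal pair once, so all pairwise distances equal $2$. Here I would show $F$ is a fort if and only if $F=N(v)$ for some $v$. For the forward direction, the covering argument from any $a\in F$ forces the three weight-$2$ differences $f-a$ ($f\in F\setminus\{a\}$) to cover all four coordinates, while the distance-$2$ condition makes any two of them share exactly one coordinate. Three weight-$2$ subsets of $\{1,2,3,4\}$ that pairwise meet in one element and cover all four must form a sunflower with a common coordinate $x$, and then $F=N(a+e_x)$. The reverse direction is Proposition~\ref{prop:nbhd_fort}. Combining the cases, a minimum fort of $Q_4$ is either a neighborhood or a union of two antipodal pairs, the latter being precisely the sets automorphic to Figure~\ref{fig:q4_minfort}; the analogous forts in $V_o$ are handled by the initial translation, completing both directions.

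The main obstacle I anticipate is the $k=0$ case, because distance data alone does not decide it: a set meeting each antipodal pair once has the same all-distances-$2$ profile whether or not it is a neighborhood (for instance $\{0000,0011,0101,0110\}$ is not a fort, while $N(0001)=\{0000,0011,0101,1001\}$ is), so the argument must use the finer coordinate-covering condition and the sunflower structure it produces rather than distances. Verifying that unions of two antipodal pairs are genuinely forts, and that they form a single automorphism class distinct from the neighborhoods (the two classes are separated by whether a distance-$4$ pair occurs), is routine once the covering argument is in place.
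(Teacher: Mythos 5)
Your proof is correct, and its top-level skeleton matches the paper's: use Propositions~\ref{prop:minimum_fort_adj}, \ref{prop:even_dist}, \ref{prop:nbhd_fort}, and \ref{prop:max_dist4} to confine a minimum fort to one part of the bipartition with $\abs{F}=4$, then split on whether $F$ contains a distance-$4$ (i.e.\ antipodal) pair. Where you differ is in how the two cases are settled, and your version is substantially more complete. The paper's proof asserts, without argument, that in the distance-$4$ case the remaining two vertices must form a second antipodal pair placed at distance $2$ from the first, and that ``all sets $F$ of this form are automorphic to the set in Figure~\ref{fig:q4_minfort}''; it likewise asserts outright that a minimum fort with all pairwise distances equal to $2$ is a neighborhood. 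You prove each of these claims: the coordinate-covering observation (every neighbor $a+e_i$ of $a\in F$ needs a second neighbor in $F$, so the weight-$2$ differences $f-a$ must cover all four coordinates) forces $y=\overline{x}$ when an antipodal pair is present, forces the sunflower structure and hence $F=N(a+e_x)$ when none is, and your translation-plus-coordinate-permutation argument supplies the automorphism claim. Your closing remark is the most valuable part of the comparison: the implication ``all pairwise distances $2$ implies neighborhood'' is false for arbitrary sets (e.g.\ $\{0000,0011,0101,0110\}$), so the fort hypothesis must be invoked exactly where the paper---and likewise the proof of Theorem~\ref{thm:min_fort}---treats the step as self-evident. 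In short, the paper's route buys brevity by leaning on the fort-violation counting already set up in Proposition~\ref{prop:max_dist4}, while yours buys a self-contained, airtight classification at the cost of some coordinate bookkeeping.
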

\begin{proof}
Suppose that $F$ is a minimum fort of $Q_{4}$.
Then, by Proposition~\ref{prop:even_dist}, $F$ only contains vertices that are an even distance apart from each other. 
If $F$ contains vertices a distance of $4$ apart, then by Proposition~\ref{prop:max_dist4}, $\abs{F}\geq 4$.
Since $F$ is a minimum fort, Proposition~\ref{prop:nbhd_fort} implies that $\abs{F}=4$.
Let $u,v\in F$ such that $\dist{u}{v}=4$.
In order to resolve all fort violations in $N(u)\cup N(v)$, there must exist $u',v'\in F$ such that $\dist{u'}{v'}=4$, and $u'$ and $v'$ are a distance of $2$ away from both $u$ and $v$. 
All sets $F$ of this form are automorphic to the set in Figure~\ref{fig:q4_minfort}.
If all vertices of $F$ are a distance of $2$ apart, then $F=N(v)$ for some $v\in V(Q_{4})$.

Conversely, suppose that $F=N(v)$ for some $v\in V(Q_{4})$ or there is an automorphism of $Q_{4}$ that maps $F$ to the set shown in Figure~\ref{fig:q4_minfort}.
In either case, $F$ is clearly a fort of $Q_{4}$.
Moreover, by Propositions~\ref{prop:even_dist} and~\ref{prop:max_dist4}, it follows that $F$ is a minimum fort of $Q_{4}$.
\end{proof}

We now state several implications of Theorem~\ref{thm:min_fort} and Proposition~\ref{prop:min_fort4}.
\begin{corollary}\label{cor:min_fort}
Let $d\geq 2$ and let $F$ be a minimum fort of $Q_{d}$.
Then, $\abs{F}=d$.
\end{corollary}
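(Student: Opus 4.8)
The plan is to treat this as an immediate consequence of the classification results just established, splitting into the generic case and the exceptional dimension $d=4$. The only content is verifying that both descriptions of minimum forts yield cardinality exactly $d$.

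First I would dispose of the case $d\geq 2$ with $d\neq 4$. Here Theorem~\ref{thm:min_fort} gives that $F$ is a minimum fort if and only if $F=N(v)$ for some $v\in V(Q_{d})$. Since Observation~\ref{obs:dreg} asserts that every vertex of $Q_{d}$ has degree $d$, we get $\abs{F}=\abs{N(v)}=d$ directly.

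Next I would handle $d=4$ using Proposition~\ref{prop:min_fort4}, which says a minimum fort is either $N(v)$ for some vertex $v$ or is automorphic to the set depicted in Figure~\ref{fig:q4_minfort}. In the first subcase, Observation~\ref{obs:dreg} again gives $\abs{F}=\abs{N(v)}=4=d$. In the second subcase, the set in Figure~\ref{fig:q4_minfort} consists of exactly four vertices, and since a graph automorphism is a bijection on $V(Q_{4})$ and hence preserves cardinality, any set automorphic to it also has four vertices; thus $\abs{F}=4=d$.

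There is essentially no obstacle here: the work has already been done in Theorem~\ref{thm:min_fort} and Proposition~\ref{prop:min_fort4}. The only point meriting a remark is that automorphisms preserve cardinality, so the exceptional $Q_{4}$ fort has the same size as a neighborhood; combining both cases yields $\abs{F}=d$ for all $d\geq 2$.
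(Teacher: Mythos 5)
Your proposal is correct and follows the same route as the paper's own proof: invoke Theorem~\ref{thm:min_fort} together with Observation~\ref{obs:dreg} for $d\neq 4$, and Proposition~\ref{prop:min_fort4} for $d=4$. The only difference is that you spell out the $d=4$ case slightly more explicitly (the exceptional set has four vertices and automorphisms preserve cardinality), which the paper leaves implicit.
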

\begin{proof}
If $d\neq 4$,
then Theorem~\ref{thm:min_fort} implies that $F=N(v)$ for some $v\in V(Q_{d})$.
Hence, by Observation~\ref{obs:dreg}, $\abs{F}=d$.
Otherwise, if $d=4$, then Proposition~\ref{prop:min_fort4} implies that $\abs{F}=d$. 
\end{proof}
\begin{corollary}\label{cor:fzf_hypercube}
For $d\geq 2$, $\Z^{*}(Q_{d})=\frac{2^{d}}{d}$.
\end{corollary}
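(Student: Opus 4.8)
The plan is to establish the two inequalities $\Z^{*}(Q_{d})\le\frac{2^{d}}{d}$ and $\Z^{*}(Q_{d})\ge\frac{2^{d}}{d}$ separately, exploiting the primal--dual structure of the fort cover model~\eqref{eq:fc-obj}--\eqref{eq:fc-const2}. Recall that $\Z^{*}(Q_{d})$ is the optimal value of the linear relaxation, namely the minimum of $\sum_{v}w_{v}$ over nonnegative weights $w$ satisfying $\sum_{v\in F}w_{v}\ge 1$ for every fort $F$. Its LP dual is the corresponding fractional packing problem: maximize $\sum_{F}y_{F}$ over nonnegative $y$ subject to $\sum_{F\ni v}y_{F}\le 1$ for every vertex $v$. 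By weak LP duality, any dual-feasible objective value is at most $\Z^{*}(Q_{d})$, which is at most any primal-feasible objective value; so if I can produce a primal-feasible solution and a dual-feasible solution that both attain $\frac{2^{d}}{d}$, the value $\Z^{*}(Q_{d})$ is squeezed between them and the proof is complete. (Note that only weak duality is needed, not strong duality.)

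For the upper bound, I would exhibit the uniform primal solution $w_{v}=\frac{1}{d}$ for all $v\in V(Q_{d})$. By Corollary~\ref{cor:min_fort}, every minimum fort has cardinality $d$, so every fort $F$ satisfies $\abs{F}\ge d$; hence $\sum_{v\in F}w_{v}=\frac{\abs{F}}{d}\ge 1$ and $w$ is feasible. Since $\abs{V(Q_{d})}=2^{d}$, the objective value is $\frac{2^{d}}{d}$, giving $\Z^{*}(Q_{d})\le\frac{2^{d}}{d}$.

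For the lower bound, I would exhibit a matching dual solution supported on the neighborhood forts. By Proposition~\ref{prop:nbhd_fort}, each $N(v)$ is a fort, so I set $y_{N(v)}=\frac{1}{d}$ for every $v\in V(Q_{d})$ and $y_{F}=0$ for all other forts. The key observation is that a vertex $u$ lies in $N(v)$ precisely when $v\in N(u)$, and by Observation~\ref{obs:dreg} there are exactly $d$ such $v$; therefore $\sum_{F\ni u}y_{F}=\frac{d}{d}=1$, so $y$ is dual-feasible. Its objective value is $\sum_{v}\frac{1}{d}=\frac{2^{d}}{d}$, and weak duality yields $\Z^{*}(Q_{d})\ge\frac{2^{d}}{d}$. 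Combining the two bounds gives $\Z^{*}(Q_{d})=\frac{2^{d}}{d}$.

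The two estimates are mirror images of one another, both built from the uniform weight $\frac{1}{d}$, reflecting the vertex- and edge-transitivity of $Q_{d}$. The only genuinely nontrivial ingredient is Corollary~\ref{cor:min_fort} (and the machinery of Theorem~\ref{thm:min_fort} and Proposition~\ref{prop:min_fort4} behind it), which guarantees that no fort has fewer than $d$ vertices; without this lower bound on fort cardinality the uniform primal solution need not be feasible. Thus the main obstacle is resolved upstream, and the remaining work here is the routine verification of the two uniform solutions together with an appeal to weak duality.
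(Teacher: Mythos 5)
Your proposal is correct, and its upper-bound half coincides exactly with the paper's: weight every vertex by $\frac{1}{d}$, use Corollary~\ref{cor:min_fort} to guarantee every fort has cardinality at least $d$ (so the uniform weighting is feasible), and conclude $\Z^{*}(Q_{d})\le\frac{2^{d}}{d}$. Where you diverge is the lower bound. The paper disposes of it in one informal sentence --- ``since no weighting of less than $1/d$ on a vertex will suffice'' --- which, read literally, is not a valid argument: a feasible fractional cover may well assign some vertices weight below $\frac{1}{d}$ and compensate elsewhere, so the claim really rests on an unstated symmetry/averaging step (vertex-transitivity of $Q_{d}$ lets one average any feasible solution over the automorphism group to get a uniform feasible solution of the same value). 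You instead exhibit an explicit dual object: the fractional fort packing $y_{N(v)}=\frac{1}{d}$ over the $2^{d}$ neighborhood forts from Proposition~\ref{prop:nbhd_fort}, which is feasible because each vertex lies in exactly $d$ neighborhoods (Observation~\ref{obs:dreg}), and then invoke weak LP duality. This is a genuine improvement in rigor: it turns the paper's hand-waved optimality claim into a certified primal--dual pair with matching values, needs nothing beyond weak duality, and as a bonus makes transparent why the bound $\ft(Q_{d})\le\Z^{*}(Q_{d})\le\Z(Q_{d})$ in~\eqref{eq:fzf_bounds} is tight fractionally even when integral disjoint-fort packings fall short. The paper's version is shorter, but yours is the one that would survive refereeing without the reader having to supply the missing symmetry argument.
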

\begin{proof}
By Proposition~\ref{prop:nbhd_fort} and Corollary~\ref{cor:min_fort}, the neighborhoods of $Q_{d}$ are minimum forts.
Moreover, by Corollary~\ref{cor:min_fort}, there is no fort of $Q_{d}$ with cardinality less than $d$.
Therefore, we must weight each vertex by $1/d$ so that the sum of weights over each fort is at least $1$.
Since no weighting of less than $1/d$ on a vertex will suffice, it follows that $\Z^{*}(Q_{d})=\frac{2^{d}}{d}$.
\end{proof}
\begin{corollary}\label{cor:ft_bounds}
Let $d\geq 2$.
Then, 
\[
\frac{2^{d}}{2^{\floor{\log_{2}(d-1)}+1}} \leq \ft(Q_{d}) \leq \frac{2^{d}}{d}. 
\]
\end{corollary}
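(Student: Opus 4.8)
The plan is to prove the two inequalities separately. The upper bound follows immediately by combining the general inequality \eqref{eq:fzf_bounds} with Corollary~\ref{cor:fzf_hypercube}, which give $\ft(Q_d)\le\Z^{*}(Q_d)=2^{d}/d$. Equivalently, one may argue directly from Corollary~\ref{cor:min_fort}: every fort of $Q_d$ has at least $d$ vertices, so $\ell$ pairwise disjoint forts occupy at least $\ell d$ of the $2^{d}$ vertices, whence $\ell\le 2^{d}/d$.

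For the lower bound, write $k=\floor{\log_2(d-1)}+1$ and note that $2^{k}$ is the least power of two with $2^{k}\ge d$; in particular $d\ge k$. Since every neighborhood of $Q_d$ is a fort (Proposition~\ref{prop:nbhd_fort}), it suffices to exhibit $2^{d-k}$ vertices whose neighborhoods are pairwise disjoint. The key reformulation is Proposition~\ref{prop:4cycle}: for $u\ne v$, the neighborhoods $N(u)$ and $N(v)$ are disjoint if and only if $\dist{u}{v}\ne 2$. Identifying $V(Q_d)$ with $\mathbb{F}_2^{d}$, so that distances become Hamming weights of differences, I would take $C=\ker H$, where $H$ is a $k\times d$ matrix whose $d$ columns are distinct vectors of $\mathbb{F}_2^{k}$ (possible since $2^{k}\ge d$) chosen to include a basis of $\mathbb{F}_2^{k}$, so that $\operatorname{rank}H=k$ and $\abs{C}=2^{d-k}$.

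A weight-$2$ vector $e_i+e_j$ lies in $C$ exactly when columns $i$ and $j$ of $H$ agree; since the columns are distinct, $C$ contains no codeword of weight $2$. As distinct $c,c'\in C$ satisfy $\dist{c}{c'}=\operatorname{wt}(c+c')\ne 2$, Proposition~\ref{prop:4cycle} shows that $\{N(c):c\in C\}$ is a family of $2^{d-k}$ pairwise disjoint forts, giving $\ft(Q_d)\ge 2^{d-k}=2^{d}/2^{\floor{\log_2(d-1)}+1}$. The main obstacle is precisely this construction, and in particular recognizing that disjointness of neighborhood-forts is an open-packing condition governed only by distance $2$, so that merely forbidding weight-$2$ codewords --- rather than imposing minimum distance $3$ --- suffices. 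This is what makes the bound tight when $d$ is a power of two: then $2^{k}=d$, all $d$ vectors of $\mathbb{F}_2^{k}$ (including the zero column) serve as the columns of $H$, and $\abs{C}=2^{d}/d$ matches the upper bound, yielding the equalities with the domination, total domination, and open packing numbers noted in the introduction. The remaining verifications --- that $2^{\floor{\log_2(d-1)}+1}$ is the least power of two at least $d$, and that $d\ge k$ so a basis fits among the columns --- are routine.
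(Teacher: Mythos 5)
Your proposal is correct, and it diverges from the paper in a meaningful way on the lower bound. The upper bound is exactly the paper's argument: $\ft(Q_d)\leq\Z^{*}(Q_d)=2^{d}/d$ by Corollary~\ref{cor:fzf_hypercube} and~\eqref{eq:fzf_bounds} (your alternative counting argument via Corollary~\ref{cor:min_fort} is also valid and even more elementary). For the lower bound, the paper makes the same first move you do --- neighborhoods are forts by Proposition~\ref{prop:nbhd_fort}, hence $\ft(Q_d)\geq\rho^{o}(Q_d)$ --- but then simply cites \cite[Theorem 3.2]{Bresar2024} for $\rho^{o}(Q_d)\geq 2^{d-\floor{\log_{2}(d-1)}-1}$, whereas you prove that packing bound from scratch. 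Your coding-theoretic construction is sound in all its details: $k=\floor{\log_2(d-1)}+1$ is indeed the least exponent with $2^{k}\geq d$, so $d$ distinct columns (including a basis) exist in $\mathbb{F}_2^{k}$ and $d\geq k$; the kernel $C$ of the resulting $k\times d$ matrix has $2^{d-k}$ elements; distinct columns exclude weight-$2$ codewords; and by Proposition~\ref{prop:4cycle} (together with bipartiteness, which rules out common neighbors at distance $1$), disjointness of $N(u)$ and $N(v)$ is governed solely by $\dist{u}{v}\neq 2$, so weight-$1$ codewords are harmless. What the paper's route buys is brevity --- a two-line proof leaning on the literature. What yours buys is self-containedness: you have effectively reproved the Bre\v{s}ar--Klav\v{z}ar--Rall packing bound, and your construction makes the sharpness at $d=2^{k}$ (Theorem~\ref{thm:ft_sharp_bound}) transparent, since then all $2^{k}=d$ column vectors are used and $\abs{C}=2^{d}/d$ meets the upper bound.
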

\begin{proof}
By Corollary~\ref{cor:fzf_hypercube} and the bounds in~\eqref{eq:fzf_bounds}, it follows that $\ft(Q_{d}) \leq \frac{2^{d}}{d}$.
Furthermore, by Proposition~\ref{prop:nbhd_fort}, each neighborhood in $Q_{d}$ is a fort, so $\ft(Q_{d})\geq\rho^{o}(Q_{d})$, where $\rho^{o}(G)$ is the open packing number of $G$, that is, the cardinality of the largest set of vertices in $G$ no two neighborhoods of which intersect. 
By~\cite[Theorem 3.2]{Bresar2024}, $\rho^{o}(Q_{d}) \geq 2^{d-\floor{\log_{2}(d-1)}-1}$.
Therefore, $\frac{2^{d}}{2^{\floor{\log_{2}(d-1)}+1}} \leq \ft(Q_{d}) \leq \frac{2^{d}}{d}$.
\end{proof}

We now show that the lower and upper bounds from Corollary~\ref{cor:ft_bounds} are equal for all $d=2^{k}$ for some $k\in\mathbb{N}$.
\begin{theorem}\label{thm:ft_sharp_bound}
Let $k\in\mathbb{N}$.
Then, $\ft(Q_{2^{k}}) = \Z^{*}(Q_{2^{k}}) = 2^{2^{k}-k}$.
\end{theorem}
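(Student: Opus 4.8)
The plan is to observe that for the special dimension $d=2^{k}$ the lower and upper bounds already recorded in Corollary~\ref{cor:ft_bounds} collapse to the same number, so that $\ft(Q_{2^{k}})$ is pinched to a single value, and then to read off $\Z^{*}(Q_{2^{k}})$ directly from Corollary~\ref{cor:fzf_hypercube}. No new combinatorial input is required: the whole argument is an arithmetic specialization of results already proved in this section, valid for $k\geq 1$ (so that $d=2^{k}\geq 2$ and the hypotheses of those corollaries are met).

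First I would handle the upper side. Substituting $d=2^{k}$ into the upper bound of Corollary~\ref{cor:ft_bounds} gives $\ft(Q_{2^{k}})\leq \frac{2^{2^{k}}}{2^{k}}=2^{2^{k}-k}$, and the same substitution in Corollary~\ref{cor:fzf_hypercube} yields $\Z^{*}(Q_{2^{k}})=\frac{2^{2^{k}}}{2^{k}}=2^{2^{k}-k}$, which already fixes the fractional value and is consistent with the chain~\eqref{eq:fzf_bounds}.

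The one step needing care is the evaluation of the floor in the lower bound. Since $k\geq 1$ implies $2^{k-1}\leq 2^{k}-1<2^{k}$, taking base-two logarithms gives $k-1\leq\log_{2}(2^{k}-1)<k$, hence $\floor{\log_{2}(2^{k}-1)}=k-1$. Substituting this into the lower bound of Corollary~\ref{cor:ft_bounds} produces
\[
\ft(Q_{2^{k}})\geq \frac{2^{2^{k}}}{2^{(k-1)+1}}=\frac{2^{2^{k}}}{2^{k}}=2^{2^{k}-k}.
\]
Combining this with the matching upper bound forces $\ft(Q_{2^{k}})=2^{2^{k}-k}$, and together with the value of $\Z^{*}(Q_{2^{k}})$ computed above this establishes the full chain of equalities. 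I expect the floor computation to be the only genuinely delicate point, which is why I isolate the inequality $2^{k-1}\leq 2^{k}-1$ holding precisely because $k\geq 1$; beyond that there is no real obstacle, as the theorem is simply the place where the general bounds become tight.
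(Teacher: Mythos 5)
Your proposal is correct and follows essentially the same route as the paper: both arguments specialize Corollary~\ref{cor:ft_bounds} and Corollary~\ref{cor:fzf_hypercube} to $d=2^{k}$ and reduce the matter to showing $\floor{\log_{2}(2^{k}-1)}=k-1$. The only cosmetic difference is that you verify the floor via the integer inequality $2^{k-1}\leq 2^{k}-1<2^{k}$, whereas the paper writes $\log_{2}(2^{k}-1)=k+\log_{2}(1-2^{-k})$ and bounds the correction term in $[-1,0)$; these are the same computation.
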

\begin{proof}
Suppose that $d=2^{k}$ for some $k\in\mathbb{N}$. 
Then, $\Z^{*}(Q_{d}) = \frac{2^{d}}{d}=2^{2^{k}-k}$. 
Moreover, 
\[
\log_{2}(d-1) = \log_{2}(2^{k}-1) = \log_{2}(2^{k}) + \log_{2}(1-2^{-k}).
\]
Note that $-1 \leq \log_{2}(1-2^{-k}) < 0$; hence,
\[
k-1 \leq \log_{2}(2^{k}) + \log_{2}(1-2^{-k}) < k,
\]
which implies that $\floor{\log_{2}(2^{k}-1)} = k-1$.
Therefore, 
\[
\frac{2^{d}}{2^{\floor{\log_{2}(d-1)}+1}} = 2^{2^{k}-k} = \frac{2^{d}}{d},
\]
and the result follows from Corollary~\ref{cor:ft_bounds}.
\end{proof}

 A dominating set of $G$ is a set of vertices such that any vertex of $G$ is in the set or has a neighbor in the set.
The domination number $\gamma(G)$ is the minimum cardinality of a dominating set of $G$. 
A total dominating set of $G$ is a set of vertices such that any vertex of $G$, including those in the set, have a neighbor in the set.
The total domination number $\gamma_{t}(G)$ is the minimum cardinality of a dominating set of $G$. In~\cite{Bresar2024}, it was shown that for $k\in\mathbb{N}$,
$\rho^{o}(Q_{2^{k}}) = \gamma(Q_{2^{k}}) = \gamma_{t}(Q_{2^{k}}) = 2^{2^{k}-k}$.
As a consequence of Theorem~\ref{thm:ft_sharp_bound}, when the dimension of the hypercube graph is a power of $2$ we get equality among $5$ distinct graph parameters.
\begin{corollary}\label{cor:sharp_bounds}
Let $k\in\mathbb{N}$.
Then,
\[
\ft(Q_{2^{k}}) = \Z^{*}(Q_{2^{k}}) = \rho^{o}(Q_{2^{k}}) = \gamma(Q_{2^{k}}) = \gamma_{t}(Q_{2^{k}}) = 2^{2^{k}-k}. 
\]
\end{corollary}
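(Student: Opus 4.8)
The plan is to assemble the five-way equality from two ingredients that are already in hand, so the corollary reduces to a direct chain of equalities. Theorem~\ref{thm:ft_sharp_bound} supplies the first two parameters: for any $k\in\mathbb{N}$ it gives
\[
\ft(Q_{2^{k}}) = \Z^{*}(Q_{2^{k}}) = 2^{2^{k}-k}.
\]
The remaining three parameters come from the external result of~\cite{Bresar2024} recorded in the paragraph preceding the statement, namely
\[
\rho^{o}(Q_{2^{k}}) = \gamma(Q_{2^{k}}) = \gamma_{t}(Q_{2^{k}}) = 2^{2^{k}-k}.
\]
First I would invoke Theorem~\ref{thm:ft_sharp_bound}, then cite~\cite{Bresar2024}, and conclude that all five quantities equal the common value $2^{2^{k}-k}$.

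Since both ingredients are already established, there is essentially no obstacle in the corollary itself; the genuine work lives upstream in Theorem~\ref{thm:ft_sharp_bound}. There the lower and upper bounds of Corollary~\ref{cor:ft_bounds} are shown to coincide exactly when $d=2^{k}$, which hinges on the floor computation $\floor{\log_{2}(2^{k}-1)}=k-1$ forcing $2^{d}/2^{\floor{\log_{2}(d-1)}+1}$ to collapse to $2^{d}/d$. The only point requiring care is therefore to keep the statement pinned to the case $d=2^{k}$: for general $d$ the sandwich in Corollary~\ref{cor:ft_bounds} need not be tight, so the fort number need not agree with the fractional zero forcing number or the domination parameters, and the five-way coincidence can fail.

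In short, I expect the write-up to be a one-line deduction: cite Theorem~\ref{thm:ft_sharp_bound} for $\ft(Q_{2^{k}})=\Z^{*}(Q_{2^{k}})=2^{2^{k}-k}$, cite~\cite{Bresar2024} for $\rho^{o}(Q_{2^{k}})=\gamma(Q_{2^{k}})=\gamma_{t}(Q_{2^{k}})=2^{2^{k}-k}$, and chain the two to obtain the claimed equality.
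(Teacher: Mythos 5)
Your proposal is correct and matches the paper exactly: the paper states this corollary without a separate proof, treating it as the immediate combination of Theorem~\ref{thm:ft_sharp_bound} (giving $\ft(Q_{2^{k}}) = \Z^{*}(Q_{2^{k}}) = 2^{2^{k}-k}$) with the result of~\cite{Bresar2024} quoted in the preceding paragraph (giving $\rho^{o}(Q_{2^{k}}) = \gamma(Q_{2^{k}}) = \gamma_{t}(Q_{2^{k}}) = 2^{2^{k}-k}$). Your observation that the five-way equality is pinned to $d=2^{k}$ and that the real work lives in Theorem~\ref{thm:ft_sharp_bound} is also consistent with how the paper organizes the argument.
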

\section{Minimum zero forcing sets of the hypercube}\label{sec:minimum_zf_sets}
By Theorem~\ref{thm:min_fort}, for $d\neq 4$, the minimum forts of $Q_{d}$ are all automorphic to each other. Thus, the maximum failed zero forcing sets of $Q_{d}$ are also automorphic to each other. 
In contrast, we show in this  section that not all minimum zero forcing sets of $Q_d$ are automorphic. We also conjecture that there are non-automorphic minimum zero forcing sets in a particularly strong sense.

First, we note that the only minimum zero forcing sets of $Q_{2}$ are adjacent pairs of vertices, and each of these sets has a propagation time of $1$.
In $Q_{3}$, there are minimum zero forcing sets with propagation time $1$ and $2$, see Figure~\ref{fig:q3_pt1-2}.
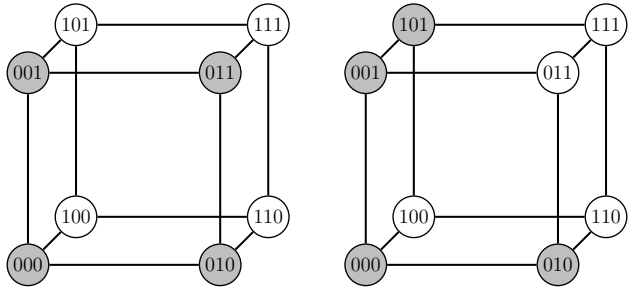
\begin{figure}[ht]
\centering
\resizebox{0.5\textwidth}{!}{%
\begin{tikzpicture}
[nodeFilledDecorate/.style={shape=circle,inner sep=2pt,draw=black,fill=lightgray,thick},%
    nodeEmptyDecorate/.style={shape=circle,inner sep=2pt,draw=black,thick},%
    lineDecorate/.style={black,=>latex',very thick},%
    scale=2.0]
    \begin{scope}
    \foreach \nodename/\x/\y in {000/0/0, 010/2/0, 011/2/2, 001/0/2}
    {
        \node (\nodename) at (\x,\y) [nodeFilledDecorate] {\nodename};
    }
    \foreach \nodename/\x/\y in {100/0.5/0.5, 110/2.5/0.5, 111/2.5/2.5, 101/0.5/2.5}
    {
        \node (\nodename) at (\x,\y) [nodeEmptyDecorate] {\nodename};
    }
    \path\foreach \startnode/\endnode in {000/010, 010/011, 011/001, 001/000, 100/110, 110/111, 111/101, 101/100, 000/100, 010/110, 011/111, 001/101}
    {
        (\startnode) edge[lineDecorate] node {} (\endnode)
    };
    \end{scope}
    \begin{scope}[xshift=100]
    \foreach \nodename/\x/\y in {000/0/0, 010/2/0, 101/0.5/2.5, 001/0/2}
    {
        \node (\nodename) at (\x,\y) [nodeFilledDecorate] {\nodename};
    }
    \foreach \nodename/\x/\y in {100/0.5/0.5, 110/2.5/0.5, 111/2.5/2.5, 011/2/2}
    {
        \node (\nodename) at (\x,\y) [nodeEmptyDecorate] {\nodename};
    }
    \path\foreach \startnode/\endnode in {000/010, 010/011, 011/001, 001/000, 100/110, 110/111, 111/101, 101/100, 000/100, 010/110, 011/111, 001/101}
    {
        (\startnode) edge[lineDecorate] node {} (\endnode)
    };
    \end{scope}
\end{tikzpicture}%
}
\caption{Minimum zero forcing sets of $Q_{3}$ shown in gray, with propagation time $1$ (left) and propagation time $2$ (right).}
\label{fig:q3_pt1-2}
\end{figure}

The following proposition shows how to construct minimum zero forcing sets of $Q_{d+1}$ that have the same propagation time as a minimum zero forcing set of $Q_{d}$.
\begin{proposition}\label{prop:prop_copy}
Let $S$ be a minimum zero forcing set of $Q_{d}$ with propagation time $k$. 
Then, there exists a minimum zero forcing set of $Q_{d+1}$ with propagation time $k$. 
\end{proposition}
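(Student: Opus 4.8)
The plan is to realize $Q_{d+1}$ as two copies of $Q_d$ joined by a perfect matching and to ``double'' the set $S$ across the two copies. Writing $Q_{d+1} = Q_d \Box K_2$, every vertex of $Q_{d+1}$ has the form $(u,c)$ with $u \in V(Q_d)$ and $c \in \{0,1\}$; two vertices $(u,c),(v,c)$ are adjacent exactly when $u \in N_{Q_d}(v)$, and in addition each vertex $(u,0)$ is matched to $(u,1)$. I would set $S' = \{(s,0) : s \in S\} \cup \{(s,1) : s \in S\}$, so that $S'$ places a copy of $S$ in each of the two layers. Since $S$ is a minimum zero forcing set of $Q_d$, we have $\abs{S} = \Z(Q_d) = 2^{d-1}$, whence $\abs{S'} = 2^d = \Z(Q_{d+1})$; therefore it suffices to prove that $S'$ is a zero forcing set whose forcing process terminates in exactly $k$ time steps.

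The heart of the argument is to show that the forcing process of $S'$ runs in lockstep across the two layers and, within each layer, exactly reproduces the forcing process of $S$ in $Q_d$. I would prove by induction on the time step $t$ that the set of filled vertices is precisely $\{(u,c) : u \in A_t,\ c \in \{0,1\}\}$, where $A_t$ denotes the set of vertices filled after $t$ steps when $S$ forces $Q_d$. The base case $t=0$ is immediate from the definition of $S'$. For the inductive step, the key observation is that the matching edges are \emph{inert}: whenever $(u,c)$ is filled we have $u \in A_t$, so its matched partner $(u,1-c)$ is filled as well, and hence the matching neighbor is never the unique white neighbor of a filled vertex. Consequently the only forces available to a filled vertex $(u,c)$ are toward white vertices $(v,c)$ with $v \in N_{Q_d}(u) \setminus A_t$, which are in bijection with the white neighbors of $u$ in the $Q_d$-process. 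Thus $(u,c)$ forces $(v,c)$ in $Q_{d+1}$ precisely when $u$ forces $v$ in $Q_d$, the same forces occur in both layers, and $A_{t+1}$ is identical in the two layers and equals the one-step update of $A_t$ in $Q_d$.

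With this claim established, the conclusion is immediate. Since $S$ is a minimum zero forcing set, $\abs{S} < \abs{V(Q_d)}$ forces $k \geq 1$; and as $S$ forces all of $Q_d$ in $k$ steps we have $A_k = V(Q_d)$ while $A_{k-1} \subsetneq V(Q_d)$. Hence the filled set of $S'$ after $k-1$ steps is $A_{k-1} \times \{0,1\} \subsetneq V(Q_{d+1})$, while after $k$ steps it is all of $V(Q_{d+1})$. Therefore $S'$ is a zero forcing set of $Q_{d+1}$, it is minimum because $\abs{S'} = \Z(Q_{d+1})$, and its propagation time is exactly $k$. I expect the main obstacle to be the careful verification that the matching edges can never trigger a cross-layer force nor otherwise accelerate the process; once this inertness is pinned down, both the parallel evolution of the two layers and the exact timing follow routinely from the induction.
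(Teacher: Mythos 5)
Your proposal is correct and follows essentially the same route as the paper: realize $Q_{d+1}$ as two matched copies of $Q_{d}$, double $S$ across both copies, observe that each filled vertex's matched partner is always filled (so matching edges never affect forcing), and conclude minimality from $\abs{S'}=2\abs{S}=\Z(Q_{d+1})$. Your explicit induction hypothesis that the filled set after $t$ steps is exactly $A_{t}\times\{0,1\}$ is a slightly more careful rendering of the paper's inductive claim, ruling out both slowdown and speedup of the process, but the underlying argument is identical.
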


\begin{proof}
Let $G$ be a hypercube of dimension $d$ with $V(G)=\{v_1,\ldots,v_n\}$, and let $S=\{v_{i_1},\ldots,v_{i_t}\}$ be a minimum zero forcing set of $G$ with propagation time $k$. Let $G'$ be another hypercube with dimension $d$ with $V(G')=\{v_1',\ldots,v_n'\}$ such that an isomorphism between $G$ and $G'$ maps $v_i$ to $v_i'$ for $1\leq i\leq n$. Let $S'=\{v_{i_1}',\ldots,v_{i_t}'\}$. Then, $S'$ is a minimum zero forcing set of $G'$ with propagation time $k$. Consider the graph $H$ obtained by starting from the disjoint union of $G$ and $G'$ and adding edges $\{v_i,v_i'\}$ for $1\leq i\leq n$. Then, $H$ is a hypercube of dimension $d+1$. We claim that $\hat{S}=S\cup S'$ is a zero forcing set of $H$. Let $S_1$ be the set of vertices in $G$ that can force in the first timestep, and let $v$ be a vertex in $S_1$. Then, $v$ has a single white neighbor in $G$. In $H$, $v$ has the same neighbors as in $G$, except it is also adjacent to $v'$ in $G'$, but $v'$ is in $S'$. Thus, $v$ again has a single white neighbor in $H$ and can force that neighbor in the first timestep. The same reasoning can be applied inductively to $S_i$, the set of vertices in $G$ that can force in the $i^{\text{th}}$ timestep. Thus, the propagation time of $\hat{S}$ is equal to the propagation time of $S$. Moreover, $\hat{S}$ must be a minimum zero forcing set of $H$, since $|\hat{S}|=2|S|$ and $Z(Q_{d+1})=2Z(Q_d)$. 
\end{proof}

Using Proposition~\ref{prop:prop_copy}, we can construct minimum zero forcings sets of $Q_{4}$ with propagation time $1$ and $2$ using copies of minimum zero forcing sets of $Q_{3}$ with propagation time $1$ and $2$, respectively; we display these minimum zero forcing sets in Figure~\ref{fig:q4_pt1-2}.
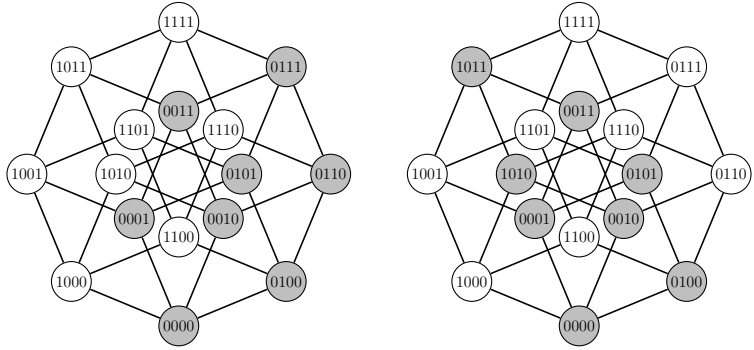
\begin{figure}[ht]
\centering
\resizebox{0.6\textwidth}{!}{%
\begin{tikzpicture}
[nodeFilledDecorate/.style={shape=circle,inner sep=2pt,draw=black,fill=lightgray,thick},%
    nodeEmptyDecorate/.style={shape=circle,inner sep=2pt,draw=black,thick},%
    lineDecorate/.style={black,=>latex',very thick},%
    scale=4.0]
    \begin{scope}
    \foreach \nodename/\x/\y in {0110/1/0, 0111/0.7071/0.7071, 0000/0/-1, 0100/0.7071/-0.7071, 0101/0.4142/0, 0011/0/0.4142, 0001/-0.2928/-0.2928, 0010/0.2928/-0.2928}
    {
        \node (\nodename) at (\x,\y) [nodeFilledDecorate] {\nodename};
    }
    \foreach \nodename/\x/\y in {1111/0/1, 1011/-0.7071/0.7071, 1001/-1/0, 1000/-0.7071/-0.7071, 1110/0.2928/0.2928, 1101/-0.2928/0.2928, 1010/-0.4142/0, 1100/0/-0.4142}
    {
        \node (\nodename) at (\x,\y) [nodeEmptyDecorate] {\nodename};
    }
    \path\foreach \startnode/\endnode in {0110/0111, 0110/0100, 0110/1110, 0110/0010, 0111/1111, 0111/0101, 0111/0011, 1111/1011, 1111/1110, 1111/1101, 1011/1001, 1011/0011, 1011/1010, 1001/1000, 1001/1101, 1001/0001, 1000/0000, 1000/1010, 1000/1100, 0000/0100, 0000/0001, 0000/0010, 0100/0101, 0100/1100, 0101/1101, 0101/0001, 1110/1010, 1110/1100, 0011/0001, 0011/0010, 1101/1100, 1010/0010}
    {
        (\startnode) edge[lineDecorate] node {} (\endnode)
    };
    \end{scope}
    \begin{scope}[xshift=75]
    \foreach \nodename/\x/\y in {1011/-0.7071/0.7071, 1010/-0.4142/0, 0000/0/-1, 0100/0.7071/-0.7071, 0101/0.4142/0, 0011/0/0.4142, 0001/-0.2928/-0.2928, 0010/0.2928/-0.2928}
    {
        \node (\nodename) at (\x,\y) [nodeFilledDecorate] {\nodename};
    }
    \foreach \nodename/\x/\y in {0111/0.7071/0.7071, 0110/1/0, 1111/0/1, 1001/-1/0, 1000/-0.7071/-0.7071, 1110/0.2928/0.2928, 1101/-0.2928/0.2928, 1100/0/-0.4142}
    {
        \node (\nodename) at (\x,\y) [nodeEmptyDecorate] {\nodename};
    }
    \path\foreach \startnode/\endnode in {0110/0111, 0110/0100, 0110/1110, 0110/0010, 0111/1111, 0111/0101, 0111/0011, 1111/1011, 1111/1110, 1111/1101, 1011/1001, 1011/0011, 1011/1010, 1001/1000, 1001/1101, 1001/0001, 1000/0000, 1000/1010, 1000/1100, 0000/0100, 0000/0001, 0000/0010, 0100/0101, 0100/1100, 0101/1101, 0101/0001, 1110/1010, 1110/1100, 0011/0001, 0011/0010, 1101/1100, 1010/0010}
    {
        (\startnode) edge[lineDecorate] node {} (\endnode)
    };
    \end{scope}
\end{tikzpicture}%
}
\caption{Minimum zero forcing sets of $Q_{4}$ shown in gray, with propagation time $1$ (left) and propagation time $2$ (right).}
\label{fig:q4_pt1-2}
\end{figure} 
\begin{figure}[ht]
\centering
\resizebox{0.6\textwidth}{!}{%
\begin{tikzpicture}
[nodeFilledDecorate/.style={shape=circle,inner sep=2pt,draw=black,fill=lightgray,thick},%
    nodeEmptyDecorate/.style={shape=circle,inner sep=2pt,draw=black,thick},%
    lineDecorate/.style={black,=>latex',very thick},%
    scale=4.0]
    \begin{scope}
    \foreach \nodename/\x/\y in {0011/0/0.4142, 1101/-0.2928/0.2928, 1001/-1/0, 0001/-0.2928/-0.2928, 1010/-0.4142/0, 0000/0/-1, 0100/0.7071/-0.7071, 0010/0.2928/-0.2928}
    {
        \node (\nodename) at (\x,\y) [nodeFilledDecorate] {\nodename};
    }
    \foreach \nodename/\x/\y in {0101/0.4142/0, 1011/-0.7071/0.7071, 0111/0.7071/0.7071, 1111/0/1, 1000/-0.7071/-0.7071, 1110/0.2928/0.2928, 1100/0/-0.4142, 0110/1/0}
    {
        \node (\nodename) at (\x,\y) [nodeEmptyDecorate] {\nodename};
    }
    \path\foreach \startnode/\endnode in {0110/0111, 0110/0100, 0110/1110, 0110/0010, 0111/1111, 0111/0101, 0111/0011, 1111/1011, 1111/1110, 1111/1101, 1011/1001, 1011/0011, 1011/1010, 1001/1000, 1001/1101, 1001/0001, 1000/0000, 1000/1010, 1000/1100, 0000/0100, 0000/0001, 0000/0010, 0100/0101, 0100/1100, 0101/1101, 0101/0001, 1110/1010, 1110/1100, 0011/0001, 0011/0010, 1101/1100, 1010/0010}
    {
        (\startnode) edge[lineDecorate] node {} (\endnode)
    };
    \end{scope}
    \begin{scope}[xshift=75]
    \foreach \nodename/\x/\y in {1111/0/1, 1101/-0.2928/0.2928, 1001/-1/0, 0001/-0.2928/-0.2928, 1010/-0.4142/0, 0000/0/-1, 0100/0.7071/-0.7071, 0010/0.2928/-0.2928}
    {
        \node (\nodename) at (\x,\y) [nodeFilledDecorate] {\nodename};
    }
    \foreach \nodename/\x/\y in {0011/0/0.4142, 0101/0.4142/0, 1011/-0.7071/0.7071, 0111/0.7071/0.7071, 1000/-0.7071/-0.7071, 1110/0.2928/0.2928, 1100/0/-0.4142, 0110/1/0}
    {
        \node (\nodename) at (\x,\y) [nodeEmptyDecorate] {\nodename};
    }
    \path\foreach \startnode/\endnode in {0110/0111, 0110/0100, 0110/1110, 0110/0010, 0111/1111, 0111/0101, 0111/0011, 1111/1011, 1111/1110, 1111/1101, 1011/1001, 1011/0011, 1011/1010, 1001/1000, 1001/1101, 1001/0001, 1000/0000, 1000/1010, 1000/1100, 0000/0100, 0000/0001, 0000/0010, 0100/0101, 0100/1100, 0101/1101, 0101/0001, 1110/1010, 1110/1100, 0011/0001, 0011/0010, 1101/1100, 1010/0010}
    {
        (\startnode) edge[lineDecorate] node {} (\endnode)
    };
    \end{scope}
\end{tikzpicture}%
}
\caption{Minimum zero forcing sets of $Q_{4}$ shown in gray, with propagation time $3$ (left) and propagation time $4$ (right).}
\label{fig:q4_pt3-4}
\end{figure}
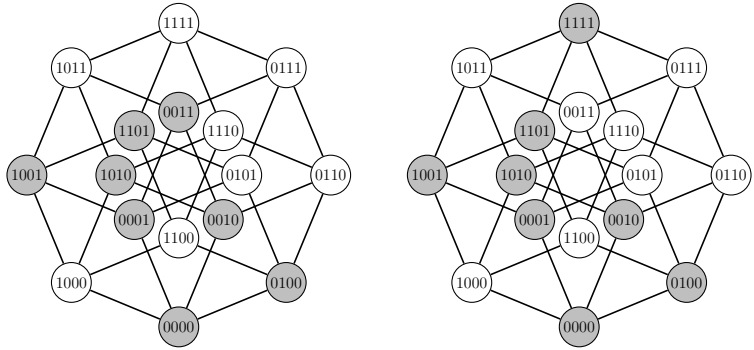

In addition, $Q_{4}$ has minimum zero forcing sets with propagation time $3$ and $4$, see Figure~\ref{fig:q4_pt3-4}.
From Figures~\ref{fig:q3_pt1-2}--\ref{fig:q4_pt3-4} and Proposition~\ref{prop:prop_copy} we can make the following conclusions.
\begin{corollary}\label{cor:pt1-4}
For $d\geq 4$, there exist minimum zero forcing sets of $Q_{d}$ with propagation time $1$, $2$, $3$, and $4$. 
\end{corollary}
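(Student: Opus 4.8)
The plan is to use the four minimum zero forcing sets of $Q_4$ exhibited in Figures~\ref{fig:q4_pt1-2} and~\ref{fig:q4_pt3-4} as base cases, and then to lift each of them to every higher dimension using Proposition~\ref{prop:prop_copy}, which preserves propagation time.

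First I would dispatch the base case $d=4$ by direct inspection. Figures~\ref{fig:q4_pt1-2} and~\ref{fig:q4_pt3-4} display minimum zero forcing sets of $Q_4$ whose propagation times are $1$, $2$, $3$, and $4$; tracing the standard color change rule on each of the four pictured configurations confirms that each indeed stalls after exactly the claimed number of time steps while filling the entire vertex set. This gives, for $d=4$, a minimum zero forcing set realizing each value $k\in\{1,2,3,4\}$.

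Next I would run a straightforward induction on $d$, one value of $k$ at a time. Fix $k\in\{1,2,3,4\}$ and suppose that $Q_d$ has a minimum zero forcing set with propagation time $k$ for some $d\geq 4$. By Proposition~\ref{prop:prop_copy}, $Q_{d+1}$ then also admits a minimum zero forcing set with propagation time $k$. Starting from the base case $d=4$ and iterating this implication, we obtain for every $d\geq 4$ and every $k\in\{1,2,3,4\}$ a minimum zero forcing set of $Q_d$ with propagation time $k$, which is exactly the statement of the corollary.

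Given Proposition~\ref{prop:prop_copy}, the inductive step is immediate, so the only substantive content is the verification of the base cases. Thus the main (and only mild) obstacle is confirming the four propagation times in $Q_4$; since this is a finite check on explicitly drawn sets, it presents no real difficulty beyond careful bookkeeping of the forcing steps.
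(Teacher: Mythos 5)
Your proposal is correct and matches the paper's own argument: the paper likewise takes the four sets exhibited in Figures~\ref{fig:q4_pt1-2} and~\ref{fig:q4_pt3-4} as the $d=4$ base cases and invokes Proposition~\ref{prop:prop_copy} repeatedly to lift each propagation time to all higher dimensions. The only cosmetic difference is that the paper obtains the propagation-time-$1$ and~$2$ sets of $Q_4$ by applying Proposition~\ref{prop:prop_copy} to the $Q_3$ sets of Figure~\ref{fig:q3_pt1-2}, whereas you verify all four $Q_4$ sets directly.
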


Since minimum zero forcing sets with distinct propagation times cannot be automorphic, we obtain the following corollary.
\begin{corollary}\label{cor:non_auto_zf}
For $d\geq 3$, there exist non-automorphic minimum zero forcing sets of $Q_{d}$. 
\end{corollary}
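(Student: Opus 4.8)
The plan is to combine the existence of minimum zero forcing sets with distinct propagation times with the fact that a graph automorphism must preserve propagation time. The key preliminary observation is that if $S$ and $S'$ are automorphic zero forcing sets of $Q_{d}$, then they have the same propagation time, and so sets with different propagation times cannot be automorphic. First I would verify this claim. Let $\phi$ be an automorphism of $Q_{d}$ with $\phi(S)=S'$. Since $\phi$ preserves adjacency, the standard color change rule transfers across $\phi$: a gray vertex $u$ can force a white vertex $v$ at a given time step of the forcing process started from $S$ if and only if $\phi(u)$ can force $\phi(v)$ at that same time step of the process started from $S'$. By induction on the time step, the set of gray vertices after $t$ steps of the $S$-process is mapped by $\phi$ onto the set of gray vertices after $t$ steps of the $S'$-process. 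Consequently, the two processes fill the entire vertex set in exactly the same number of time steps, so $S$ and $S'$ share a propagation time.

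With this observation in hand, the corollary follows by exhibiting, for each $d\geq 3$, two minimum zero forcing sets with different propagation times. For $d=3$, Figure~\ref{fig:q3_pt1-2} displays a minimum zero forcing set of propagation time $1$ alongside another of propagation time $2$; since their propagation times differ, the observation forces them to be non-automorphic. For $d\geq 4$, Corollary~\ref{cor:pt1-4} already guarantees minimum zero forcing sets of propagation times $1$, $2$, $3$, and $4$, so in particular there are at least two with distinct propagation time, which are therefore non-automorphic. This covers all $d\geq 3$.

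There is no substantial obstacle in this argument; the only point requiring care is the preliminary claim that automorphisms preserve propagation time, and even this reduces to the elementary fact that the standard color change rule is phrased purely in terms of adjacency, which any automorphism respects. The remainder is simply an appeal to the already-constructed examples from Figure~\ref{fig:q3_pt1-2} and Corollary~\ref{cor:pt1-4}, so once the transfer of the forcing process under $\phi$ is recorded, the proof is immediate.
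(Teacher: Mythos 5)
Your proposal is correct and follows essentially the same route as the paper: the paper derives this corollary in one line from Corollary~\ref{cor:pt1-4} and the $Q_{3}$ examples in Figure~\ref{fig:q3_pt1-2}, citing the fact that minimum zero forcing sets with distinct propagation times cannot be automorphic. The only difference is that you spell out the proof of that fact (the induction transferring the forcing process across an automorphism), which the paper leaves implicit.
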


We conclude this section with the following conjecture regarding the propagation time interval of the hypercube graph.
\begin{conjecture}\label{con:full_pt_int}
Let $d\geq 2$.
Then, $\pt(Q_{d}) = 1$ and $\PT(Q_{d})=2^{d-2}$.
Moreover, $Q_{d}$ has a full propagation time interval. 
\end{conjecture}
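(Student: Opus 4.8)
The statement has three parts: the value of $\pt(Q_d)$, the value of $\PT(Q_d)$, and the full-interval claim. The first part, $\pt(Q_d)=1$, is already recorded in Section~\ref{sec:prelim}: since $\Z(Q_d)=2^{d-1}$, any facet of the hypercube (the $2^{d-1}$ vertices sharing a fixed coordinate) is a minimum zero forcing set in which every filled vertex has exactly one white neighbor, so the entire graph is forced in a single time step. The remaining work is to pin down $\PT(Q_d)$ and to realize every intermediate value.

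For the upper bound $\PT(Q_d)\leq 2^{d-2}$, the plan is to prove a \emph{paired forcing} lemma: for any minimum zero forcing set of $Q_d$ and any time step at which a force occurs while white vertices still remain, at least two forces occur. Granting this, let $p_t$ denote the number of forces performed at step $t$ and let $T$ be the propagation time; then $p_t\geq 2$ for $t<T$ and $p_T\geq 1$, so that $2^{d-1}=\sum_{t=1}^{T}p_t\geq 2(T-1)+1$, which forces $T\leq 2^{d-2}$. To establish the lemma I would analyze a single force $u\to v$, where $u$ and $v$ differ in coordinate $i$, exploiting the $4$-cycle structure of $Q_d$ from Proposition~\ref{prop:4cycle}: for $u\to v$ to be the unique force, every other neighbor $u_j$ of $u$ must already be filled, and each such $u_j$ lies on a $4$-cycle $u,u_j,v_j,v$. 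I would argue that avoiding a second simultaneous force $u_j\to v_j$ across all of these cycles forces a filling pattern incompatible with $u$ having been reached by a minimum forcing process, using the bipartiteness of $Q_d$ (Observation~\ref{obs:bipartite}) and Observation~\ref{obs:neighbors} to control which neighbors lie in which part. I expect this lemma to be \emph{the main obstacle}: it is precisely the step left open, and the difficulty is that a purely local count around one $4$-cycle does not suffice, since one must rule out a globally consistent white region that is forced one vertex at a time, using minimality of the set in an essential way.

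For the lower bound $\PT(Q_d)\geq 2^{d-2}$ together with the full-interval claim, the plan is induction on $d$. The base cases $d=2,3$ are handled directly (see Figure~\ref{fig:q3_pt1-2}), giving every value in $\{1,\ldots,2^{d-2}\}$. For the inductive step I would introduce a \emph{doubling} construction on $Q_d=Q_{d-1}\Box K_2$: writing the two copies of $Q_{d-1}$ as $G_0$ and $G_1$ joined by the perfect matching, take the filled set to be a minimum zero forcing set $S$ of $G_0$ of propagation time $k$ together with its reversal set (the terminal vertices of the forcing chains of $S$) placed in $G_1$. Because each vertex of $G_0$ also sees its twin in $G_1$, forces in $G_0$ can fire only after the corresponding twin has been filled, so the two forcing fronts must advance alternately; the aim is to show this interleaving exactly doubles the schedule, yielding propagation time $2k$, while a one-vertex modification of the construction yields $2k-1$. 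Ranging $k$ over the inductively available values $\{1,\ldots,2^{d-3}\}$ of $Q_{d-1}$, the pairs $\{2k-1,2k\}$ exhaust $\{1,\ldots,2^{d-2}\}$, giving both $\PT(Q_d)\geq 2^{d-2}$ (from $k=2^{d-3}$) and the full interval; Proposition~\ref{prop:prop_copy} provides an independent route to the lower values and can streamline the base of the induction, and Corollary~\ref{cor:pt1-4} already certifies the values $1,2,3,4$.

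The two technical points I would verify carefully are that the doubling set is genuinely a \emph{minimum} zero forcing set of $Q_d$ (its cardinality is the correct $2^{d-1}=2\cdot 2^{d-2}$ by construction, but one must check that it forces the whole cube and does so only through the intended interleaved schedule) and that the $2k-1$ perturbation covers the needed values with no gaps. Neither is conceptually deep, but both require an explicit description of the forcing schedule in the product. The genuinely hard ingredient remains the paired-forcing upper bound, which is why the statement is posed as a conjecture rather than a theorem.
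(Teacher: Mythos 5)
The first thing to note is that the paper does not prove this statement: it is posed as Conjecture~\ref{con:full_pt_int}, and the conclusion explicitly lists both $\PT(Q_{d})=2^{d-2}$ and the full propagation interval as open problems. So there is no paper proof to compare against, and your proposal must stand as a self-contained argument. It does not, for two reasons --- one you acknowledge and one you do not. The part you get right is cheap: $\pt(Q_{d})=1$ is already established in the paper's preliminaries (via $\Z(Q_{d})=2^{d-1}$ and the cited Proposition 3.12 of Hogben et al.), and your facet argument is a correct re-derivation. The acknowledged gap is the paired-forcing lemma for the upper bound. You correctly flag it as the main obstacle, but be aware it is strictly stronger than what the conjecture needs: a schedule such as $(3,3,1,1)$ on $Q_{4}$ would violate your lemma while still satisfying $\PT(Q_{4})\leq 4$. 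So the lemma could be false even if the conjecture is true, and nothing in the paper's toolkit (Proposition~\ref{prop:4cycle}, Observations~\ref{obs:bipartite} and~\ref{obs:neighbors}) comes close to proving it; this part of your plan is essentially a restatement of the open problem.

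The unacknowledged gap is the doubling construction for the lower bound and the full interval, and it is not merely unverified --- it fails. Take the smallest instance $d=3$, $k=1$: the minimum zero forcing set $S=\{00,01\}$ of $Q_{2}$ has forcing chains $00\to 10$ and $01\to 11$, so its reversal is $\{10,11\}$, and your construction (writing the copy index as the last coordinate) produces $\{000,010,101,111\}$. Its complement $\{100,110,001,011\}$ is a fort of $Q_{3}$: every outside vertex ($000$, $010$, $101$, $111$) has exactly two neighbors in it. By Theorem~\ref{thm:fort_cover}, a zero forcing set must intersect every fort, so your set is not a zero forcing set at all; it stalls with zero forces. The failure is structural, not an artifact of small $d$: whenever $S$ has propagation time $1$, every vertex of $S$ forces, so the reversal is exactly $V(Q_{d-1})\setminus S$; then each $u\in S$ keeps its one white neighbor inside its copy and gains a white twin in the other copy, and symmetrically each reversal vertex has a white twin plus exactly one white neighbor in its copy, so no vertex can ever force. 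Note also that the paper's own propagation-time-$2$ example (Figure~\ref{fig:q3_pt1-2}, right) is \emph{not} of your form: its two halves project to $\{00,01\}$ and $\{00,10\}$, two adjacent pairs sharing a vertex, not a set and its reversal. The paper's only transfer result, Proposition~\ref{prop:prop_copy}, preserves propagation time when going from $Q_{d}$ to $Q_{d+1}$ but can never increase it, which is precisely why the paper certifies only the values $1,2,3,4$ (Corollary~\ref{cor:pt1-4}) and leaves everything from $5$ up to $2^{d-2}$, including the value of $\PT(Q_{d})$ itself, as a conjecture. A construction that provably increments or doubles propagation time in the product is the missing contribution; you do not have one, and the reversal idea is not a viable starting point for it.
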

\section{Minimal Forts of Cartesian Products}\label{sec:minimal_forts}
In this section, we derive constructions for minimal forts of a Cartesian product of graphs.
In the process, we show that these constructions can account for some, but not all, of the minimal forts of the hypercube graph.

We begin with a construction of minimal forts of $G\Box G'$, where $G$ and $G'$ are any graphs. 
It is known that if $F$ is a fort of $G$ and $F'$ is a fort of $G'$, then $F\times F'$ is a fort of $G\Box G'$, see~\cite[Proposition 4.1]{Cameron2023}.
In Theorem~\ref{thm:cart_prod}, we show that if $F$ and $F'$ are minimal and $F$ contains no adjacent vertices, then $F\times F'$ is minimal. 

It is worth noting the necessity of $F$ containing no adjacent vertices.
Indeed, let $F'=V(Q_{1})$ denote the only fort of $Q_{1}$ and let $F$ denote the minimal fort of $Q_{3}$ shown in Figure~\ref{fig:q3_minimalfort}. 
Then, the fort $F\times F'$ is shown in Figure~\ref{fig:q4_fort}, which is not minimal since it is a superset of the fort of $Q_{4}$ shown in Figure~\ref{fig:q4_minfort}.
\begin{figure}[ht]
\centering
\resizebox{0.3\textwidth}{!}{%
\begin{tikzpicture}
[nodeFilledDecorate/.style={shape=circle,inner sep=2pt,draw=black,fill=lightgray,thick},%
    nodeEmptyDecorate/.style={shape=circle,inner sep=2pt,draw=black,thick},%
    lineDecorate/.style={black,=>latex',very thick},%
    scale=4.0]
    \foreach \nodename/\x/\y in {0100/0.7071/-0.7071, 0101/0.4142/0, 0011/0/0.4142, 0010/0.2928/-0.2928, 1011/-0.7071/0.7071, 1101/-0.2928/0.2928, 1010/-0.4142/0, 1100/0/-0.4142}
    {
        \node (\nodename) at (\x,\y) [nodeFilledDecorate] {\nodename};
    }
    \foreach \nodename/\x/\y in {0001/-0.2928/-0.2928, 1110/0.2928/0.2928, 1000/-0.7071/-0.7071, 0111/0.7071/0.7071, 0000/0/-1, 1111/0/1, 0110/1/0, 1001/-1/0}
    {
        \node (\nodename) at (\x,\y) [nodeEmptyDecorate] {\nodename};
    }
    \path\foreach \startnode/\endnode in {0110/0111, 0110/0100, 0110/1110, 0110/0010, 0111/1111, 0111/0101, 0111/0011, 1111/1011, 1111/1110, 1111/1101, 1011/1001, 1011/0011, 1011/1010, 1001/1000, 1001/1101, 1001/0001, 1000/0000, 1000/1010, 1000/1100, 0000/0100, 0000/0001, 0000/0010, 0100/0101, 0100/1100, 0101/1101, 0101/0001, 1110/1010, 1110/1100, 0011/0001, 0011/0010, 1101/1100, 1010/0010}
    {
        (\startnode) edge[lineDecorate] node {} (\endnode)
    };
\end{tikzpicture}%
}
\caption{A non-minimal fort (white) of $Q_{4}$ constructed from $F\times F'$, where $F'=V(Q_{1})$ and $F$ is the minimal fort of $Q_{3}$ shown in Figure~\ref{fig:q3_minimalfort}.}
\label{fig:q4_fort}
\end{figure}
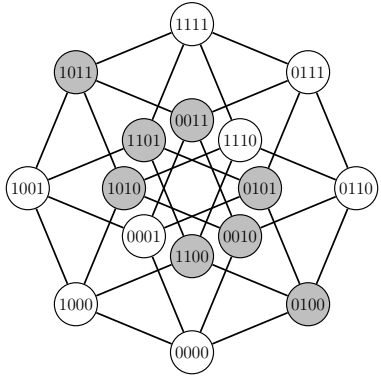
\begin{theorem}\label{thm:cart_prod}
Let $G$ be a graph with minimal fort $F$ such that no vertices in $F$ are adjacent.
Also, let $G'$ be a graph with minimal fort $F'$.
Then, $\hat{F}=F\times F'$ is a minimal fort of $\hat{G}=G\Box G'$. 
\end{theorem}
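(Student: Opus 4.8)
The plan is to invoke the product-of-forts result of~\cite{Cameron2023}, which already gives that $\hat F = F\times F'$ is a fort of $\hat G=G\Box G'$; so it remains only to prove minimality, i.e.\ that every nonempty fort $S\subseteq \hat F$ equals $\hat F$. Throughout I would use the product adjacency structure: a vertex $(w,w')$ has neighbors $(v,w')$ with $v\in N_{G}(w)$ together with $(w,v')$ with $v'\in N_{G'}(w')$. The one feature I intend to exploit repeatedly is that, since $F$ contains no two adjacent vertices, each $a\in F$ satisfies $N_{G}(a)\cap F=\emptyset$; because $S\subseteq F\times F'$, this forces every ``$G$-direction'' neighbor $(v,w')$ of a vertex $(a,w')$ with $a\in F$ to lie outside $S$.

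The first main step decouples the two coordinates. Fix a nonempty fort $S\subseteq \hat F$ and, for $a\in F$, write the co-fiber $S^{a}=\{a'\in F'\colon (a,a')\in S\}$, so that trivially $S=\bigsqcup_{a\in F}\{a\}\times S^{a}$. I would first show that every nonempty co-fiber $S^{a}$ is a fort of $G'$. To see this, pick $a\in F$ and $w'\in V(G')\setminus S^{a}$, so that $(a,w')\notin S$, and evaluate the fort condition of $S$ at $(a,w')$. The $G'$-direction contributes exactly $\abs{N_{G'}(w')\cap S^{a}}$ neighbors in $S$, while the $G$-direction contributes none: any such neighbor $(v,w')$ would need $v\in N_{G}(a)$ with $(v,w')\in S\subseteq F\times F'$, hence $v\in N_{G}(a)\cap F=\emptyset$. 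Thus $\abs{N_{G'}(w')\cap S^{a}}\neq 1$ for all $w'\notin S^{a}$, so $S^{a}$ is a fort of $G'$, and since $S^{a}\subseteq F'$, minimality of $F'$ forces $S^{a}=F'$.

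Consequently every co-fiber is either empty or all of $F'$, so $S=C\times F'$, where $C=\{a\in F\colon S^{a}=F'\}$ is nonempty. The second step mirrors the first in the other coordinate, now using minimality of $F$. For $w\in V(G)\setminus C$ and any $a'\in F'$ we have $(w,a')\notin S$, and evaluating the fort condition of $S$ at $(w,a')$ gives $\abs{N_{G}(w)\cap C}$ neighbors in $S$ from the $G$-direction and none from the $G'$-direction, since a neighbor $(w,v')\in S=C\times F'$ would require $w\in C$. Hence $\abs{N_{G}(w)\cap C}\neq 1$ for all $w\notin C$, so $C$ is a fort of $G$; minimality of $F$ then gives $C=F$, and therefore $S=F\times F'$, completing the proof of minimality.

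The step I expect to be the crux is the decoupling carried out in the second paragraph: the whole argument hinges on the fact that the no-adjacency hypothesis on $F$ annihilates the $G$-direction contribution to $N_{\hat G}((a,w'))\cap S$, which is precisely what lets me transfer the fort condition from $\hat G$ down to $G'$ (and symmetrically to $G$). This also explains the asymmetry in the hypotheses and matches the necessity illustrated by Figures~\ref{fig:q3_minimalfort} and~\ref{fig:q4_fort}: when $F$ has adjacent vertices the $G$-direction no longer vanishes, the coordinates remain coupled, and $F\times F'$ can fail to be minimal. I would only need to double-check the bookkeeping that the distinct full co-fibers collapse to the single all-of-$F'$ value and that the empty-fiber rows contribute nothing, both of which are routine.
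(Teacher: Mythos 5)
Your proof is correct, and it reaches the conclusion by a route that is recognizably dual to the paper's rather than identical to it. The paper also quotes \cite[Proposition 4.1]{Cameron2023} for fortness and then argues the contrapositive: it fixes a proper nonempty $\hat{S}\subsetneq\hat{F}$, slices it in the \emph{opposite} direction into fibers $S_{i}=\{v\in F\colon (v,i)\in\hat{S}\}\subseteq F$ indexed by $i\in F'$, and splits into two cases, in each of which it constructs an explicit fort-violating vertex. When some $S_{i}$ is proper and nonempty, minimality of $F$ gives $u\in V(G)\setminus S_{i}$ with a unique neighbor $v\in S_{i}$; non-adjacency of $F$ forces $u\notin F$, so the $G'$-direction vanishes at $(u,i)$ and $(u,i)$ witnesses that $\hat{S}$ is not a fort. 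When every $S_{i}$ is $\emptyset$ or $F$, so that $\hat{S}=F\times(F'\setminus\mathcal{I})$, minimality of $F'$ supplies the violating vertex. You instead slice by $a\in F$ into co-fibers $S^{a}\subseteq F'$ and run the argument positively: non-adjacency of $F$ kills the $G$-direction at every vertex whose first coordinate lies in $F$, so the fort condition on $S$ descends to each nonempty $S^{a}$, which minimality of $F'$ collapses to $F'$; then $S=C\times F'$, whose product form kills the $G'$-direction, so $C$ is a fort of $G$ and minimality of $F$ gives $C=F$. The same three hypotheses appear in both proofs but in transposed positions (the paper pairs minimality of $F$ with non-adjacency, you pair non-adjacency with minimality of $F'$), and your version needs no case split, since your second step is a consequence of the first rather than a complementary case. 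What the paper's organization buys is an explicit violation certificate for every proper nonempty subset; what yours buys is a cleaner descent statement --- nonempty co-fibers of any fort contained in $\hat{F}$ are forts of $G'$ --- which makes transparent why the non-adjacency hypothesis must sit on $F$ and not on $F'$, consistent with the counterexample in Figure~\ref{fig:q4_fort}.
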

\begin{proof}
    By~\cite[Proposition 4.1]{Cameron2023}, $\hat{F}$ is a fort of $\hat{G}$; hence, we only need to show minimality. 
    To this end, we show that every proper non-empty subset of $\hat{F}$ is not a fort of $\hat{G}$.
    Let $\emptyset\subsetneq\hat{S}\subsetneq\hat{F}$ and for each $i\in F'$ define $S_{i}=\left\{v\in F\colon (v,i)\in\hat{S}\right\}$.
    We break our analysis into two cases: First, where $\emptyset\subsetneq S_{i}\subsetneq F$ for some $i\in F'$; second, where for all $i\in F'$, $S_{i}=\emptyset$ or $S_{i}=F$.

    In this first case, there is an $i\in F'$ such that $\emptyset\subsetneq S_{i}\subsetneq F$.
    Since $F$ is a minimal fort of $G$, there is a $u\in V(G)\setminus{S_{i}}$ and $v\in S_{i}$ such that $N_{G}(u)\cap S_{i} = \{v\}$.
    Since $F$ is assumed to have no adjacent neighbors, it follows that $u\notin F$.
    Therefore, $(u,v')\notin\hat{F}$ for all $v'\in F'$, which implies that $N_{\hat{G}}\left((u,i)\right)\cap\hat{S}=\left\{(v,i)\right\}$.
    Hence, $\hat{S}$ is not a fort of $\hat{G}$. 

    In the second case, $S_{i}$ is either empty or all of $F$ for each $i\in F'$.
    Let $\mathcal{I}\subset F'$ denote the set of vertices such that $S_{i}=\emptyset$ for all $i\in\mathcal{I}$.
    Since $\emptyset\subsetneq\hat{S}\subsetneq\hat{F}$, it follows that $\emptyset\subsetneq\mathcal{I}\subsetneq F'$. 
    Moreover, since $F'$ is minimal, there is a $u'\in V(G')\setminus\left(F'\setminus\mathcal{I}\right)$ and a $v'\in F'\setminus\mathcal{I}$ such that $N_{G'}(u')\cap\left(F'\setminus\mathcal{I}\right)=\{v'\}$. 
    Note that $N_{\hat{G}}\left((u,u')\right)\cap\hat{S}=\left\{(u,v')\right\}$ for each $u\in V(G)$.
    Therefore, for any $u\in F$, $(u,u')$ is a vertex outside of $\hat{S}$ with only one neighbor in $\hat{S}$, namely $(u,v')$.
    Hence, $\hat{S}$ is not a fort of $\hat{G}$. 
\end{proof}

Next, we consider the construction of minimal forts of $G\Box G'$, where $G$ and $G'$ are bipartite graphs. 
In this case, we are able to swap the non-adjacency condition of Theorem~\ref{thm:cart_prod} with a condition that requires every vertex in $F$ and $F'$ to have a neighbor in $F$ and $F'$, respectively. 
As an example, let $F$ denote the minimal fort of $Q_{3}$ shown in Figure~\ref{fig:q3_minimalfort}.
If we denote the partitions of $Q_{3}$ by $V_{e}$ and $V_{o}$, see Observation~\ref{obs:bipartite}, then $F\cap V_{e}=\{000,011\}$ and $F\cap V_{o}=\{100,111\}$.
Now, let $F'=V(Q_{1})$ denote the only fort of $Q_{1}$ and let $V_{e}'$ and $V_{o}'$ denote the partitions of $Q_{1}$; then, 
\[
\hat{F} = \left(F\cap V_{e}\right)\times\left(F'\cap V_{e}'\right)\cup\left(F\cap V_{o}\right)\times\left(F'\cap V_{o}'\right)
\]
is the minimum fort of $Q_{4}$ shown in Figure~\ref{fig:q4_minfort}.
\begin{theorem}\label{thm:bip_cart_prod}
Let $G=(V,E)$ be a bipartite graph with partition $V=V_{1}\sqcup V_{2}$ and let $G'=(V',E')$ be a bipartite graph with partition $V'=V'_{1}\sqcup V'_{2}$.
Also, let $F$ and $F'$ be forts of $G$ and $G'$, respectively.
If every vertex of $F$ and $F'$ has a neighbor in $F$ and $F'$, respectively, then 
\[
\hat{F}=\left(F\cap V_{1}\right)\times\left(F'\cap V'_{1}\right)\cup\left(F\cap V_{2}\right)\times\left(F'\cap V'_{2}\right)
\]
is a fort of $\hat{G}=G\Box G'$. 
\end{theorem}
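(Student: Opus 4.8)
The plan is to exploit the bipartite structure of the Cartesian product. First I would observe that $\hat{G}=G\Box G'$ is itself bipartite, with the natural bipartition $X\sqcup Y$ where $X=(V_{1}\times V_{1}')\cup(V_{2}\times V_{2}')$ and $Y=(V_{1}\times V_{2}')\cup(V_{2}\times V_{1}')$; indeed, traversing an edge of $\hat{G}$ flips the parity in exactly one coordinate and hence flips membership between $X$ and $Y$. By construction $\hat{F}\subseteq X$, and the neighbor hypotheses force $F\cap V_{1}$, $F\cap V_{2}$, $F'\cap V_{1}'$, and $F'\cap V_{2}'$ to all be non-empty (a vertex of $F$ lying in one part has all its neighbors in the other part, so its guaranteed neighbor in $F$ lies there), whence $\hat{F}\neq\emptyset$.

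Next I would split the verification of the fort condition over the two parts. For a vertex $(u,u')\in X\setminus\hat{F}$, every neighbor lies in $Y$, which is disjoint from $\hat{F}$; hence $(u,u')$ has exactly $0$ neighbors in $\hat{F}$, and the fort condition is trivially satisfied. This disposes of all vertices of type $(V_{1},V_{1}')$ or $(V_{2},V_{2}')$ in one stroke.

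The substantive case is a vertex $(u,u')\in Y$, all of which lie outside $\hat{F}$; here I would compute the neighbor count explicitly. Using that the neighbors of $(u,u')$ are $\{(w,u')\colon w\in N_{G}(u)\}\cup\{(u,w')\colon w'\in N_{G'}(u')\}$ together with the bipartite placement of the four parts, one obtains
\[
\bigl|N_{\hat{G}}((u,u'))\cap\hat{F}\bigr| \;=\; [u'\in F']\cdot\bigl|N_{G}(u)\cap F\bigr| \;+\; [u\in F]\cdot\bigl|N_{G'}(u')\cap F'\bigr|,
\]
where $[P]$ equals $1$ when $P$ holds and $0$ otherwise. Then I would run the four subcases according to whether $u\in F$ and $u'\in F'$: if both indicators vanish the count is $0$; if exactly one term survives, the relevant fort axiom (applied to $u\notin F$ in $G$, or to $u'\notin F'$ in $G'$) forces that count to be $\neq 1$; and if $u\in F$ and $u'\in F'$ simultaneously, the count equals $\bigl|N_{G}(u)\cap F\bigr|+\bigl|N_{G'}(u')\cap F'\bigr|\geq 2$ by the two neighbor hypotheses. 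In every subcase the count avoids $1$, completing the proof.

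The main point — and the only place the neighbor hypotheses are used — is precisely the last subcase: a vertex $(u,u')\in Y$ with $u\in F$ and $u'\in F'$ sees one ``column'' of $\hat{F}$ through $G$ and one ``row'' through $G'$, and although the fort axioms say nothing about the neighbor counts of vertices lying \emph{inside} $F$ or $F'$, the requirement that each such $u$ and $u'$ has a neighbor in its own fort guarantees that each contribution is at least $1$, summing to at least $2$. This is exactly the hypothesis that replaces the non-adjacency condition of Theorem~\ref{thm:cart_prod}; without it the count could equal $1$. I expect no serious obstacle beyond carefully bookkeeping the bipartite placement of the parts when deriving the displayed neighbor-count formula.
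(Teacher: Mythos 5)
Your proof is correct and follows essentially the same route as the paper's: both verify the fort condition by analyzing neighbors of a vertex $(u,u')$ outside $\hat{F}$ according to whether $u\in F$ and $u'\in F'$, invoking the fort axiom of $F$ (resp.\ $F'$) when a coordinate lies outside its fort, and invoking the neighbor hypotheses exactly in the case $u\in F$ and $u'\in F'$. The differences are only presentational: you organize the argument as an exact neighbor-count identity over the bipartition of $G\Box G'$ (and explicitly check non-emptiness of $\hat{F}$, which the paper leaves implicit), whereas the paper exhibits a second neighbor of $(u,u')$ in $\hat{F}$ directly after a WLOG reduction.
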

\begin{proof}
Let $(u,u')\in V(\hat{G})\setminus{\hat{F}}$ such that there exists a vertex
\[
(v,v')\in N_{\hat{G}}\left((u,u')\right)\cap\hat{F}.
\]
Since $(v,v')\in\hat{F}$, it follows that $v\in F\cap V_{1}$ and $v'\in F'\cap V'_{1}$ or $v\in F\cap V_{2}$ and $v'\in F'\cap V'_{2}$.
Without loss of generality, we assume that $v\in F\cap V_{1}$ and $v'\in F'\cap V'_{1}$.
Furthermore, since $(u,u')$ and $(v,v')$ are adjacent in $\hat{G}$, it follows that either $u=v$ and $u'\in N_{G'}(v')$ or $u'=v'$ and $u\in N_{G}(v)$.
Without loss of generality, we assume that $u=v$ and $u'\in N_{G'}(v')$.
Now, we only need to consider two cases: Either $u'\in F'$ or $u'\notin F'$.

If $u'\in F'$, then $u'\in F'\cap V'_{2}$ since $G'$ is bipartite and $u'$ is adjacent to $v'$, which is in $V'_{1}$.
Furthermore, since every vertex in $F$ is adjacent to another vertex in $F$, there exists a $w\in F\cap V_{2}$ such that $w\in N_{G}(v)=N_{G}(u)$.
Note that $(w,u')\in\hat{F}$ and $(w,u')\in N_{\hat{G}}\left((u,u')\right)$.

If $u'\notin F'$, then there exists a $w'\in F'\setminus\{v'\}$ such that $u'\in N_{G'}(v')$ since $F'$ is a fort of $G'$. 
Note that $(v,w')\in\hat{F}$ and $(v,w')\in N_{\hat{G}}\left((u,u')\right)$. 
\end{proof}

The fort $\hat{F}$  from Theorem~\ref{thm:bip_cart_prod} may not be minimal, even if we restrict $F$ and $F'$ to be minimal. 
For example, let $G$ and $F$ denote the graph and minimal fort, respectively, shown in Figure~\ref{fig:bipartite}.
If we let $V_{1}$ and $V_{2}$ denote the vertex partitions of $G$, then 
\[
\hat{F} = \left(F\cap V_{1}\right)\times\left(F\cap V_{1}\right)\cup\left(F\cap V_{2}\right)\times\left(F\cap V_{2}\right)
\]
is a fort of $G\Box G$ but it is not minimal since $(1,1)\in\hat{F}$ and $\hat{F}\setminus\{(1,1)\}$ is a fort. 
Indeed, $N((1,1))=\{(1,2),(2,1)\}$ and both vertices $(1,2)$ and $(2,1)$ have three neighbors in $\hat{F}$.
Therefore, removing $(1,1)$ from $\hat{F}$ will not result in any possible forcings.
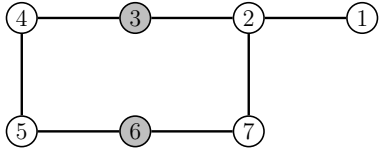
\begin{figure}[ht]
\centering
\resizebox{0.3\textwidth}{!}{%
\begin{tikzpicture}
[nodeFilledDecorate/.style={shape=circle,inner sep=2pt,draw=black,fill=lightgray,thick},%
    nodeEmptyDecorate/.style={shape=circle,inner sep=2pt,draw=black,thick},%
    lineDecorate/.style={black,=>latex',very thick},%
    scale=2.0]
    \foreach \nodename/\x/\y in {3/-1/0, 6/-1/-1}
    {
        \node (\nodename) at (\x,\y) [nodeFilledDecorate] {\nodename};
    }
    \foreach \nodename/\x/\y in {1/1/0, 2/0/0, 4/-2/0, 5/-2/-1, 7/0/-1}
    {
        \node (\nodename) at (\x,\y) [nodeEmptyDecorate] {\nodename};
    }
    \path\foreach \startnode/\endnode in {1/2, 2/3, 3/4, 4/5, 5/6, 6/7, 7/2}
    {
        (\startnode) edge[lineDecorate] node {} (\endnode)
    };
\end{tikzpicture}%
}
\caption{A minimal fort (white) where each vertex in the fort has at least one neighbor in the fort.}
\label{fig:bipartite}
\end{figure}

In the following theorem, we show that the fort $\hat{F}$ will be minimal if we require both $F$ and $F'$ to be minimal, every vertex in $F$ must have a neighbor in $F$, and every vertex in $F'$ must have exactly one neighbor in $F'$.
\begin{theorem}\label{thm:bip_cart_prod_minimal}
Let $G=(V,E)$ be a bipartite graph with partition $V=V_{1}\sqcup V_{2}$ and let $G'=(V',E')$ be a bipartite graph with partition $V'=V'_{1}\sqcup V'_{2}$.
Also, let $F$ and $F'$ be minimal forts of $G$ and $G'$, respectively.
If every vertex of $F$  has at least one neighbor in $F$ and every vertex of $F'$ has exactly one neighbor in $F'$, then 
\[
\hat{F}=\left(F\cap V_{1}\right)\times\left(F'\cap V'_{1}\right)\cup\left(F\cap V_{2}\right)\times\left(F'\cap V'_{2}\right)
\]
is a minimal fort of $\hat{G}=G\Box G'$. 
\end{theorem}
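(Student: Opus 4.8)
The plan is to split the statement into its two halves: that $\hat{F}$ is a fort, and that it is minimal. The first half is immediate from Theorem~\ref{thm:bip_cart_prod}, since every vertex of $F'$ has exactly one neighbor in $F'$ (hence at least one), so both $F$ and $F'$ satisfy the hypotheses of that theorem and $\hat{F}$ is a fort of $\hat{G}$. All the real work is in minimality. To prove it, I would fix an arbitrary $(v,v')\in\hat{F}$ and exhibit a vertex of $\hat{G}$ witnessing that $\hat{F}\setminus\{(v,v')\}$ is not a fort, i.e.\ a vertex outside $\hat{F}\setminus\{(v,v')\}$ with exactly one neighbor in $\hat{F}\setminus\{(v,v')\}$. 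Using the symmetry of the construction under simultaneously swapping $V_{1}\leftrightarrow V_{2}$ and $V'_{1}\leftrightarrow V'_{2}$, I may assume $v\in F\cap V_{1}$ and $v'\in F'\cap V'_{1}$. Because every vertex of $F'$ has exactly one neighbor in $F'$, there is a unique $p'\in F'$ adjacent to $v'$, and bipartiteness forces $p'\in F'\cap V'_{2}$; this $p'$ drives the argument.

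Next I would extract the relevant consequence of the minimality of $F$. Since $F$ is a minimal fort (and $\abs{F}\geq 2$ because every vertex of $F$ has a neighbor in $F$), the set $F\setminus\{v\}$ is a nonempty non-fort, so some $x\in(V\setminus F)\cup\{v\}$ has exactly one neighbor in $F\setminus\{v\}$. Comparing this with the fort property of $F$ shows that at least one of two situations must occur: either (i) $v$ has a unique neighbor $z\in F$, or (ii) there is an $x\in N_{G}(v)\setminus F$ with exactly two neighbors in $F$, one being $v$ and the other some $v''\in F$. In case (i), I would take the candidate witness $(v,p')$; a direct neighbor count in $\hat{G}$, using that $p'$ has exactly one $F'$-neighbor (namely $v'$) and $v$ has exactly one $F$-neighbor (namely $z$), shows that $(v,p')\notin\hat{F}$ has precisely the two neighbors $(v,v')$ and $(z,p')$ in $\hat{F}$, so after deleting $(v,v')$ only $(z,p')$ remains. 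In case (ii), I would take $(x,v')$; since $x\notin F$, its only neighbors in $\hat{F}$ arise by moving in the $G$-coordinate, namely $(v,v')$ and $(v'',v')$, so after deleting $(v,v')$ only $(v'',v')$ remains. Either way $\hat{F}\setminus\{(v,v')\}$ has a fort violation, which yields minimality.

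The main obstacle is bookkeeping the neighbor counts in the Cartesian product correctly: for each candidate witness I must track separately the neighbors obtained by moving in the $G$-coordinate and those obtained by moving in the $G'$-coordinate, and repeatedly invoke bipartiteness to decide which of these land in $(F\cap V_{1})\times(F'\cap V'_{1})$ versus $(F\cap V_{2})\times(F'\cap V'_{2})$. The delicate point is that the weaker ``at least one neighbor'' hypothesis on $F$ is not by itself enough to pin down a witness; if $v$ had several $F$-neighbors, the natural candidate $(v,p')$ would retain more than one neighbor after deletion. This is exactly why the dichotomy coming from the minimality of $F$ is needed, routing cases (i) and (ii) to different witnesses. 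Verifying that each produced witness genuinely lies outside $\hat{F}$, again via the bipartite parity structure, is routine but must be confirmed in both cases.
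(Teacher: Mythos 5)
There is a genuine gap, and it sits in your very first reduction: you only prove that $\hat{F}\setminus\{(v,v')\}$ fails to be a fort for each \emph{single} vertex $(v,v')\in\hat{F}$. Minimality means that \emph{no} proper non-empty subset of $\hat{F}$ is a fort, and ``every co-singleton subset is a non-fort'' does not imply this: fort-ness is not monotone under deletion, so a subset obtained by removing two or more vertices can be a fort even when every single-vertex deletion destroys fort-ness. (For instance, the union of two disjoint minimal forts of size at least two, lying in different components or far apart, has exactly this behavior: each single deletion leaves a non-fort, yet the union is not minimal.) The paper's proof instead takes an arbitrary $\emptyset\subsetneq\hat{S}\subsetneq\hat{F}$, pairs up the coordinates of $F'$ using the exactly-one-neighbor hypothesis, defines $S_{ij}=\{v\in F\colon (v,i)\in\hat{S}~\textrm{or}~(v,j)\in\hat{S}\}$ for each neighbor pair $i,j\in F'$, and splits into the case where some $S_{ij}$ is a proper non-empty subset of $F$ (handled via minimality of $F$, in the same spirit as your cases (i)/(ii)) and the case where every $S_{ij}$ is $\emptyset$ or $F$, i.e.\ $\hat{S}$ is a union of full ``blocks'' of $\hat{F}$. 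Your argument never examines these block-unions, and they are precisely the dangerous candidates.

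The gap is not cosmetic, because ruling out the block-unions is where the minimality of $F'$ is needed --- a hypothesis your proof never invokes. Indeed, your argument applies verbatim when $G'=2K_{2}$ with $F'=V(G')$, which is a fort in which every vertex has exactly one neighbor in $F'$ but which is \emph{not} minimal: taking $G=C_{6}$ with minimal fort $F=\{1,2,4,5\}$ (every vertex of $F$ has a neighbor in $F$), your cases (i)/(ii) correctly show that every co-singleton of $\hat{F}$ is a non-fort, yet $\hat{F}$ splits as a disjoint union of two forts, one in each component of $C_{6}\,\Box\, 2K_{2}$, so it is not minimal. Hence the inference ``co-singletons are non-forts, therefore minimal'' cannot be valid. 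The paper closes exactly this hole in its second case: writing $\mathcal{I}$ for the set of coordinates of $F'$ whose blocks are missing from $\hat{S}$, minimality of $F'$ guarantees $F'\setminus\mathcal{I}$ is not a fort of $G'$, which produces a vertex $(u,u')$ outside $\hat{S}$ with a unique neighbor $(u,v')$ in $\hat{S}$. Your neighbor counts and the dichotomy extracted from the minimality of $F$ are correct as far as they go, and would serve in the paper's first case; the missing piece is the entire second case.
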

\begin{proof}
By Theorem~\ref{thm:bip_cart_prod}, $\hat{F}$ is a fort of $\hat{G}$; hence, we only need to show minimality.
To this end, we show that every proper non-empty subset of $\hat{F}$ is not a fort of $\hat{G}$.
Let $\emptyset\subsetneq\hat{S}\subsetneq\hat{F}$.
For each $i\in F'$ there is a unique $j\in F'$ such that $i$ and $j$ are adjacent in $G'$. 
Now, for each neighbor pair $i,j\in F'$, define $S_{ij}=\left\{v\in F\colon (v,i)\in\hat{S}~\textrm{or}~(v,j)\in\hat{S}\right\}$.
We break our analysis into two cases: First, where $\emptyset\subsetneq S_{ij}\subsetneq F$ for some neighbor pair $i,j\in F'$; second, where for all neighbor pairs $i,j\in F'$, $S_{ij}=\emptyset$ or $S_{ij}=F$. 

In the first case, there is a neighbor pair $i,j\in F'$ such that $\emptyset\subsetneq S_{ij}\subsetneq F$. 
Since $F$ is a minimal fort of $G$, there is a $u\in V(G)\setminus{S_{ij}}$ and $v\in S_{ij}$ such that $N_{G}(u)\cap S_{ij}=\{v\}$. 
Without loss of generality, assume that $(v,i)\in\hat{S}$.
Then, since $j$ is the only vertex in $F'$ that is adjacent to $i$, it follows that
\[
N_{\hat{G}}\left((u,i)\right)\cap\hat{S} = \left\{(v,i)\right\},
\]
so $\hat{S}$ is not a fort of $\hat{G}$.

In the second case, for each neighbor pair $i,j\in F'$, $S_{ij}=\emptyset$ or $S_{ij}=F$.
Let $\mathcal{I}\subset F'$ denote the set of vertices $i\in F'$ such that $S_{ij}=\emptyset$, where $j$ is in the unique neighbor of $i$ in $F'$. 
Since $\emptyset\subsetneq\hat{S}\subsetneq\hat{F}$, it follows that $\emptyset\subsetneq\mathcal{I}\subsetneq F'$.
Moreover, since $F'$ is minimal, there is a $u'\in V(G')\setminus\left(F'\setminus\mathcal{I}\right)$ and a $v'\in F'\setminus\mathcal{I}$ such that $N_{G'}(u')\cap\left(F'\setminus\mathcal{I}\right)=\{v'\}$.
Note that 
\[
N_{\hat{G}}\left((u,u')\right)\cap\hat{S} = \left\{(u,v')\right\},
\]
for each $u\in V(G)$.
Therefore, for any $u\in F$, $(u,u')$ is a vertex outside of $\hat{S}$ with only one neighbor in $\hat{S}$, namely $(u,v')$.
Hence, $\hat{S}$ is not a fort of $\hat{G}$. 
\end{proof}

We conclude this section by showing that Proposition~\ref{prop:nbhd_fort}, Theorem~\ref{thm:cart_prod}, and Theorem~\ref{thm:bip_cart_prod_minimal} can be used to construct some, but not all, of the minimal forts of the hypercube graph. 
To begin, there are only $2$ minimal forts on $Q_{2}$, both of which are neighborhoods. 

On $Q_{3}$, there are $2^{3}$ minimal (and also minimum) forts that are neighborhoods. 
Furthermore, we can use the neighborhood forts on $Q_{2}$ and Theorem~\ref{thm:cart_prod} to construct other minimal forts of $Q_{3}$. 
For example, $F=N(00)=\{01,10\}$ is a minimal fort of $Q_{2}$ where no vertex in $F$ has a neighbor in $F$; hence, by Theorem~\ref{thm:cart_prod}, $\hat{F}=N(00)\times\{0,1\}=\{010,100,011,101\}$ is a minimal (but not minimum) fort of $Q_{3}$. 
Moreover, every subset of $V(Q_{3})$ that is automorphic to $\hat{F}$ is also a minimal fort, there are $6$ such distinct automorphic subsets, counting $\hat{F}$. 
Therefore, $Q_{3}$ has at least $2^{3}+6=14$ minimal forts.
One can verify computationally that this enumerates all minimal forts of $Q_{3}$. 

On $Q_{4}$, there are $2^{4}$ minimal forts that are neighborhoods. 
Again, we can use the neighborhood forts on $Q_{3}$ and Theorem~\ref{thm:cart_prod} to construct other minimal forts on $Q_{4}$.
For example, $F=N(000)=\{010,100,001\}$ is a minimal fort of $Q_{3}$ where no vertex in $F$ has a neighbor in $F$; hence, by Theorem~\ref{thm:cart_prod}, $\hat{F}=N(000)\times\{0,1\}=\{0100,1000,0010,0101,1001,0011\}$ is a minimal fort of $Q_{4}$.
Moreover, every subset of $V(Q_{4})$ that is automorphic to $\hat{F}$ is a minimal fort, and there are $32$ such distinct automorphic subsets, counting $\hat{F}$.

In addition, we can apply Theorem~\ref{thm:bip_cart_prod_minimal} to the minimal forts on $Q_{3}$ that were constructed from minimal forts on $Q_{2}$ to produce minimal forts of $Q_{4}$.
For example, $F=\{010,100,011,101\}$ is a minimal fort of $Q_{3}$ where every vertex in $F$ has at least one neighbor in $F$.
If we let $V_{e}$ and $V_{o}$ denote the partitions of $Q_{3}$, then Theorem~\ref{thm:bip_cart_prod_minimal} implies that 
\[
\hat{F} = \left(F\cap V_{e}\right)\times\{0\}\cup\left(F\cap V_{o}\right)\times\{1\}
\]
is a minimal fort of $Q_{4}$.
Again, every subset of $V(Q_{4})$ that is automorphic to $\hat{F}$ is a minimal fort, there are $12$ such distinct automorphic subsets, counting $\hat{F}$. 
Therefore, $Q_{4}$ has at least $2^{4}+32+12=60$ minimal forts. 
In this case, we have not identified a complete enumeration of the minimal forts on $Q_{4}$; for example, we are missing minimal forts of the form in Figure~\ref{fig:q4_other_minimal_fort}. 

Similar analysis can be used to partially characterize the minimal forts of larger hypercubes, but a full characterization of all the minimal forts of $Q_d$ is still an open problem.
\begin{figure}[ht]
\centering
\resizebox{0.3\textwidth}{!}{%
\begin{tikzpicture}
[nodeFilledDecorate/.style={shape=circle,inner sep=2pt,draw=black,fill=lightgray,thick},%
    nodeEmptyDecorate/.style={shape=circle,inner sep=2pt,draw=black,thick},%
    lineDecorate/.style={black,=>latex',very thick},%
    scale=4.0]
    \foreach \nodename/\x/\y in {0111/0.7071/0.7071, 1001/-1/0, 0100/0.7071/-0.7071, 0011/0/0.4142, 0010/0.2928/-0.2928, 1101/-0.2928/0.2928, 1010/-0.4142/0, 1100/0/-0.4142}
    {
        \node (\nodename) at (\x,\y) [nodeFilledDecorate] {\nodename};
    }
    \foreach \nodename/\x/\y in {1011/-0.7071/0.7071, 0101/0.4142/0, 0001/-0.2928/-0.2928, 1110/0.2928/0.2928, 1000/-0.7071/-0.7071, 0000/0/-1, 1111/0/1, 0110/1/0}
    {
        \node (\nodename) at (\x,\y) [nodeEmptyDecorate] {\nodename};
    }
    \path\foreach \startnode/\endnode in {0110/0111, 0110/0100, 0110/1110, 0110/0010, 0111/1111, 0111/0101, 0111/0011, 1111/1011, 1111/1110, 1111/1101, 1011/1001, 1011/0011, 1011/1010, 1001/1000, 1001/1101, 1001/0001, 1000/0000, 1000/1010, 1000/1100, 0000/0100, 0000/0001, 0000/0010, 0100/0101, 0100/1100, 0101/1101, 0101/0001, 1110/1010, 1110/1100, 0011/0001, 0011/0010, 1101/1100, 1010/0010}
    {
        (\startnode) edge[lineDecorate] node {} (\endnode)
    };
\end{tikzpicture}%
}
\caption{A minimal fort of $Q_{4}$ not attained via the constructions in Proposition~\ref{prop:nbhd_fort}, Theorem~\ref{thm:cart_prod}, nor Theorem~\ref{thm:bip_cart_prod_minimal}.}
\label{fig:q4_other_minimal_fort}
\end{figure}
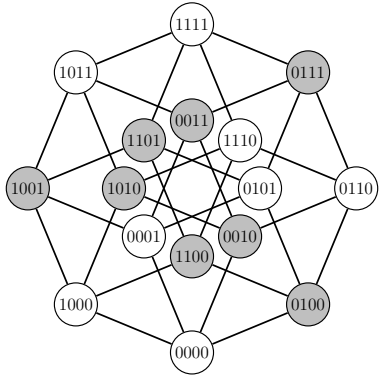
\section{Conclusion}\label{sec:conclusion}
This article provides a comprehensive characterization of the minimum forts of the hypercube graph $Q_{d}$. Specifically, Theorem~\ref{thm:min_fort} establishes that for dimensions $d \geq 2$ and $d \neq 4$, every minimum fort of $Q_{d}$ is the neighborhood of some vertex $v \in Q_{d}$. Furthermore, Proposition~\ref{prop:min_fort4} demonstrates that the minimum forts of $Q_{4}$ are either neighborhoods or are automorphic to the set depicted in Figure~\ref{fig:q4_minfort}.
As a consequence of this characterization, we derive the fractional zero forcing number of the hypercube in Corollary~\ref{cor:fzf_hypercube}, along with upper and lower bounds on the fort number of the hypercube in Corollary~\ref{cor:ft_bounds}. Additionally, when the dimension of the hypercube is a power of two, we establish that the fort number and fractional zero forcing number are equal to the domination number, total domination number, and the open packing number.

While the minimum forts of the hypercube are highly symmetric, the minimum zero forcing sets are highly non-symmetric. In particular, Corollary~\ref{cor:pt1-4} indicates that for $d \geq 4$, $Q_{d}$ has minimum zero forcing sets with propagation times of $1$, $2$, $3$, and $4$. Moreover, we conjecture that $Q_d$ has minimum zero forcing sets with propagation times of $1, 2, \ldots, 2^{d-2}$, but this remains an open problem, as is the complete enumeration of all the minimum zero forcing sets of $Q_d$.


Lastly, we present general constructions for minimal forts in the Cartesian product of graphs. In particular, Theorem~\ref{thm:bip_cart_prod} establishes that $F \times F'$ is a minimal fort of $G \Box G'$ provided that $F$ is a minimal fort of $G$ containing no adjacent vertices, and $F'$ is a minimal fort of $G'$. Moreover, Theorem~\ref{thm:bip_cart_prod_minimal} shows that when both $G$ and $G'$ are bipartite, minimal forts for $G \Box G'$ can be constructed from minimal forts $F$ and $F'$ of $G$ and $G'$, respectively, given that every vertex in $F$ has at least one neighbor in $F$ and every vertex in $F'$ has exactly one neighbor in $F'$.

While these constructions can be applied generally, they are particularly useful for constructing minimal forts of the hypercube graph, as detailed in Section~\ref{sec:minimal_forts}. However, these methods do not enumerate all minimal forts of the hypercube graph. For example, for $Q_{4}$, we can construct $60$ minimal forts using the above results, but computational methods reveal that there are $348$ minimal forts. Thus, it remains an open problem to either characterize all minimal forts or to provide bounds on the number of minimal forts of the hypercube graph.

\end{document}